\theoremstyle{plain}
   \newtheorem{teo}{Theorem}
   \newtheorem{lema}[teo]{Lemma}
\theoremstyle{definition}
   \newtheorem{defi}{Definition}
\theoremstyle{remark}
 \newtheorem{obs}{Remark}
 \newtheorem{afirmacion}{Claim}
\numberwithin{equation}{section}
\newcommand{\R}{\mathbb{R}} 
\newcommand{\N}{\mathbb{N}} 
\newcommand{\Z}{\mathbb{Z}} 
\newcommand{\e}{\varepsilon} 
\newcommand{\s}[2]{\sum_{#1}^{#2}} 
\newcommand{\norm}[1]{\left\|#1\right\|} 
\begin{document}
	
	\title[Improvements on Sawyer type estimates]{Improvements on Sawyer type estimates  for generalized maximal functions}

	\author[F. Berra]{Fabio Berra}
	\address{CONICET and Departamento de Matem\'{a}tica (FIQ-UNL),  Santa Fe, Argentina.}
	\email{fberra@santafe-conicet.gov.ar}
	
	\author[M. Carena]{Marilina Carena}
	\address{CONICET (FIQ-UNL) and Departamento de Matem\'{a}tica (FHUC-UNL),  Santa Fe, Argentina.}
	\email{marilcarena@gmail.com}
	
	\author[G. Pradolini]{Gladis pradolini}
	\address{CONICET and Departamento de Matem\'{a}tica (FIQ-UNL),  Santa Fe, Argentina.}
	\email{gladis.pradolini@gmail.com}

	\thanks{The authors were supported by CONICET and UNL}
	
	\subjclass[2010]{42B20, 42B25}
	
	\keywords{Young functions, maximal operators, Muckenhoupt weights}
	
	\begin{abstract}
		In this paper we prove mixed inequalities for the maximal operator $M_\Phi$, for general Young functions $\Phi$ with certain additional properties, improving and generalizing some previous estimates for the Hardy-Littlewood maximal operator proved by E. Sawyer. We show that given $r\geq 1$, if $u,v^r$ are weights belonging to the $A_1$-Muckenhoupt class and $\Phi$ is a Young function as above, then the inequality
		\[uv^r\left(\left\{x\in \R^n: \frac{M_\Phi(fv)(x)}{v(x)}>t\right\}\right)\leq C\int_{\R^n}\Phi\left(\frac{|f(x)|}{t}\right)u(x)v^r(x)\,dx\]
		holds for every positive $t$.
		
		A motivation for studying these type of estimates is to find an alternative way to prove the boundedness properties of $M_\Phi$. Moreover, it is well-known that for the particular case $\Phi(t)=t(1+\log^+t)^m$ with $m\in\N$ these maximal functions control, in some sense, certain operatos in Harmonic Analysis.
	\end{abstract}

	\maketitle
	
	\section*{Introduction}
	
	In \cite{Muckenhoupt_Wheeden_77}, B. Muckenhoupt and R. Wheeden proved certain weighted weak estimates that involved the Hardy-Littlewood maximal operator or the Hilbert transform. More precisely, they proved that given $1\leq p<\infty$ and $w\in A_p$, there exists a positive constant $C$ such that the inequality
	\begin{equation}\label{estimacion M-W}
	\left|\left\{x\in \R: T f(x)w^{1/p}(x)>t\right\}\right|\leq \frac{C}{t^p}\int_{\R} |f(x)|^pw(x)\,dx
	\end{equation} 
	holds for every positive $t$, where $T$ is either of the two operators mentioned above. These type of inequalities were studied as a motivation to prove some two weighted norm inequalities like those that appear in \cite{M-W-76}. 
	
	The difference between these estimates and the classical weak type inequalities  is that we must handle with level sets of product of functions, and this fact suggests that classical covering lemmas or decomposition techniques would not apply directly. In \cite{Muckenhoupt_Wheeden_77}, the authors use a special classification of intervals, and the inequality \eqref{estimacion M-W} follows from an estimation of the measure of certain subsets of them, called ``principal intervals''. 
	
	Inspired in this paper, few years later E. Sawyer proved in \cite{Sawyer} that, if $u,v$ are weights belonging to the $A_1$-Muckenhoupt class and $M$ is the Hardy-Littlewood maximal operator, then the estimate
	\begin{equation}\label{estimacion sawyer}
	uv\left(\left\{x\in \R: \frac{M(fv)(x)}{v(x)}>t\right\}\right)\leq C\int_\R |f(x)|u(x)v(x)\,dx 
	\end{equation} 
	holds for every positive $t$. This last inequality can be seen as the weak (1,1) type of the auxiliary operator $S$ defined by $Sf(x)=M(fv)(x)v^{-1}(x)$ with respect to the measure $d\mu=uv\,dx$. Furthermore, it can be used to give an alternative proof of the boundedness of the Hardy-Littlewood maximal operator in $L^p(w)$, for $1<p<\infty$ and $w\in A_p$, proved by B. Muckenhoupt in \cite{Muckenhoupt-72}. 
	

	Later on, in \cite{CU-M-P} the authors extended inequality \eqref{estimacion sawyer} to $\R^n$ and for both the Hardy-Littlewood maximal function and Calder\'on-Zygmund operators (CZOs). They considered two pair of conditions on the weights involved: $u,v\in A_1$ and $u\in A_1, v\in A_{\infty}(u)$. For the first case they follow similar ideas as in \cite{Sawyer}. The second condition is instead more ``suitable'' in the sense that the product $uv$ is an $A_\infty$-weight and therefore some classical techniques like Calder\'on-Zygmund decomposition can be applied.  The main idea in this work is to obtain the corresponding mixed estimate for the dyadic Hardy-Littlewood maximal operator, $M_{\mathcal{D}}$, and then obtain an analogous result for $M$ by extrapolation techniques. 
	The corresponding estimate for CZOs is achieved in a similar way. 
	
	Recently, in \cite{Li-Ombrosi-Perez} the authors extended the estimates given in \cite{CU-M-P} to a more general case. More precisely, they proved that if $\mathcal{T}$ is either the Hardy-Littlewood maximal operator or a CZO, $u\in A_1$ and $v\in A_\infty$ then the estimate
	\begin{equation}\label{extension v en A infinito}
 uv\left(\left\{x\in \R^n: \frac{|\mathcal{T}(fv)(x)|}{v(x)}>t\right\}\right)\leq C\int_{\R^n} |f(x)|u(x)v(x)\,dx 
	\end{equation}
	holds for every positive $t$. 
	
	Observe that \eqref{estimacion M-W} can be obtained as a direct consequence of \eqref{extension v en A infinito} by taking $u\in A_1$, $v=u^{-1}\in A_2\subset A_\infty$ and $\tilde f=fv$.
	
	Then, a natural question is whether such estimates remain true for a more general class of maximal operators, which control in some sense classical operators from Harmonic Analysis. For example, it is well-known that certain maximal operators associated to the Young function $\varphi(t)=t(1+\log^+t)^m$ control the higher order commutators of CZOs. In this direction, in \cite{bcp}, the authors proved mixed weak estimates  in $\R^n$ for weights $u$  and $v$, where $u$ is arbitrary but $v=|x|^{\beta}$ with $\beta<-n$. Concretely, we proved that   
	
	\begin{equation*}
	uw\left(\left\{x\in \R^n: \frac{M_{\Phi_0}(fv)(x)}{v(x)}>t\right\}\right)\leq C\int_{\R^n}\Phi_0\left(\frac{|f(x)|v(x)}{t}\right)Mu(x)\,dx
	\end{equation*}
	holds for every positive $t$, where $w=1/\Phi_0(v^{-1})$, $\Phi_0(t)=t^r(1+\log^+t)^{\delta}$, with $r\geq 1$ and $\delta\geq0$. 
	
	Later on, in \cite{Berra} the author showed that a similar behavior occurs in the case $u, v^r\in A_1$, that is,   
	\begin{equation}\label{mixta_para_M_Phi_version1}
	uw\left(\left\{x\in \R^n: \frac{M_{\Phi_0}(fv)(x)}{v(x)}>t\right\}\right)\leq C\int_{\R^n}\Phi_0\left(\frac{|f(x)|v(x)}{t}\right)u(x)\,dx,
	\end{equation}
	where $w$ and $\Phi_0$ are as above.
	
	
	A motivation for studying these type of estimates is to find an alternative way to prove the boundedness of the operator $M_{\Phi}$.  Although in \cite{Berra} was established that \eqref{mixta_para_M_Phi_version1} extends the estimates in \cite{CU-M-P} not only for $M$ but also for $M_r$, this inequality turns out to be non-homogeneous, and even when $v^r\in A_1$, the resulting weight $w$ might not. This fact forbids us to use the result in order to achieve such an alternative proof. Since it is known that the operator $M_{\Phi_0}$ is bounded in $L^p(w)$ for $r<p<\infty$ and $w\in A_{p/r}$ (see \cite{B-D-P}), which is the same condition for the boundedness of the operator $M_r$, an interesting question is if \eqref{mixta_para_M_Phi_version1} could be improved. 
	
	In this paper we answer this question positively. Moreover, we prove mixed weak estimates for the operator $M_{\Phi}$ for general Young functions $\Phi$ with some additional properties, improving and generalizing the previous estimates. 
	Given $r\geq 1$, we define the class $\mathcal{F}_r$ as the set of all the Young functions $\Phi$ that have a lower type $r$, are submultiplicative and verify that there exist constants $C_0>0$, $\delta\geq0$ and $t_0\geq 1$ such that
	\begin{equation}\label{propiedad acotacion de Phi}
	\frac{\Phi(t)}{t^r}\leq C_0 (\log t)^\delta, \quad\textrm{ for }t\geq t_0.
	\end{equation}
    Concretely, we have the following result.
	
	\begin{teo}\label{teorema principal}
Let $r\geq 1$ and $\Phi\in \mathcal{F}_r$. If $u,v^r$ are weights belonging to the $A_1$-Muckenhoupt class, then there exists a positive constant $C$ such that 
\[uv^r\left(\left\{x\in \R^n: \frac{M_\Phi(fv)(x)}{v(x)}>t
\right\}\right)\leq C\int_{\R^n}\Phi\left(\frac{f}{t}\right)uv^r\,dx\]
holds for every $t>0$ and every bounded function with compact support.
	\end{teo}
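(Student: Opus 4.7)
My plan is to adapt the dyadic Sawyer scheme of \cite{CU-M-P} and \cite{Berra} to the generalized setting, using the $A_1$ condition on $v^r$ as the engine of a stopping-time decomposition and exploiting the submultiplicativity and growth properties of $\Phi\in\mathcal{F}_r$ to handle the Luxemburg norms. To begin, I normalize $t=1$ by replacing $f$ with $f/t$, and then, using the three-shifted-dyadic-grids trick together with the fact that $\Phi$ is doubling (a consequence of its lower $r$-type and submultiplicativity), I reduce the inequality for $M_\Phi$ to the one for the dyadic analogue $M_\Phi^{\mathcal{D}}$.

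Let $E=\{x\in\R^n:M_\Phi^{\mathcal{D}}(fv)(x)>v(x)\}$. Since $v^r\in A_1$, for a.e.\ $x\in Q$ one has $v(x)\geq c_0\bigl(\frac{1}{|Q|}\int_Q v^r\bigr)^{1/r}$ with $c_0=[v^r]_{A_1}^{-1/r}$. I select the family $\{Q_j\}$ of \emph{maximal} dyadic cubes $Q$ such that $\|fv\|_{\Phi,Q}>\lambda_Q:=c_0\bigl(\frac{1}{|Q|}\int_Q v^r\bigr)^{1/r}$; these cubes are pairwise disjoint, and the pointwise $A_1$ estimate for $v^r$ implies $E\subseteq\bigcup_j Q_j$ up to a null set. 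The goal per-cube estimate is then
\[
uv^r(Q_j)\leq C\int_{Q_j}\Phi(|f|)\,uv^r\,dy,
\]
so that summing over $j$, together with a routine truncation/density step, closes the argument.

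To establish this, I start from $\int_{Q_j}\Phi(|f|v/\lambda_{Q_j})\,dy>|Q_j|$ and decompose the integrand via submultiplicativity, $\Phi(|f|v/\lambda_{Q_j})\leq C\,\Phi(|f|)\,\Phi(v/\lambda_{Q_j})$. The growth condition \eqref{propiedad acotacion de Phi} combined with the lower $r$-type gives $\Phi(v/\lambda_{Q_j})\leq C(v/\lambda_{Q_j})^r\bigl(1+\log^+(v/\lambda_{Q_j})\bigr)^{\delta}$, and the reverse H\"older inequalities enjoyed by the $A_1$ weights $u$ and $v^r$ provide the higher integrability needed to absorb the logarithmic factor and to trade the normalization $\lambda_{Q_j}^r=c_0^r\cdot\frac{1}{|Q_j|}\int_{Q_j}v^r$ for a genuine integral against $uv^r$ on $Q_j$.

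\textbf{Main obstacle.} The per-cube step is the crux. Its difficulty stems from the fact that $uv^r$ need not belong to $A_\infty$, so the classical Calder\'on-Zygmund toolkit based on doubling of this weight is unavailable. The argument must instead rely on the subtle interplay between the $A_1$ self-improvement of $u$ and $v^r$ (via reverse H\"older) and the $(\log t)^\delta$-slack built into the class $\mathcal{F}_r$; together these suffice to convert the decoupling $\Phi(|f|v)\leq C\Phi(|f|)\Phi(v)$, which a priori loses the precise link between $\Phi(v)$ and $v^r$, into a genuine pointwise-averaged bound against $uv^r$.
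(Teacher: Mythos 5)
Your reduction to a dyadic grid and your idea of decoupling $\Phi(|f|v/\lambda)\lesssim\Phi(|f|)\Phi(v/\lambda)$ and then using the lower $r$-type, the growth condition \eqref{propiedad acotacion de Phi} and reverse H\"older for $v^r$ are indeed ingredients of the actual proof (they are essentially the content of the paper's Claim~2). But the architecture you build around them has a genuine gap: the per-cube estimate $uv^r(Q_j)\leq C\int_{Q_j}\Phi(|f|)\,uv^r\,dy$, which you single out as the crux, is simply false under the hypotheses $u,v^r\in A_1$, so no amount of reverse H\"older or $(\log t)^\delta$-slack will produce it. Your stopping condition only controls the \emph{average} $\frac{1}{|Q_j|}\int_{Q_j}v^r$, while on the left-hand side you must integrate $uv^r$ over all of $Q_j$, where $v$ is unbounded; passing from one to the other requires an inequality of the form $\int_{Q}uv^r\lesssim\frac{u(Q)}{|Q|}\int_Q v^r$, i.e.\ an $A_\infty$-type joint behaviour of the pair $(u,v^r)$ that does not follow from $u,v^r\in A_1$. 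Concretely, take $u=|x|^{-\alpha}$ and $v^r=|x|^{-\beta}$ with $0<\alpha,\beta<n$ and $\alpha+\beta>n$: both are $A_1$, yet $uv^r$ is not even locally integrable, so for a cube containing the origin your per-cube inequality has an infinite left-hand side against a finite right-hand side (with $f$ supported away from the origin). This non-$A_\infty$ behaviour of $uv^r$ is precisely the obstruction you mention, but it defeats your scheme rather than being absorbable into it.

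The paper's proof avoids this by never using $v^r$ over a whole stopping cube: it splits the level set along the sets $A_k=\{a^k<v\leq a^{k+1}\}$, intersects with the maximal cubes of $\Omega_k=\{M_{\mathcal D}v>a^k\}\cap\{M_{\Phi,\mathcal D}g>a^k\}$, and keeps only those cubes ($(k,j)\in\Gamma$) on which $v\leq a^{k+1}$ on a set of positive measure, so the substitution $v^r\approx a^{kr}$ is used only where it is legitimate. The price is a double sum $\sum_{k,j}u(Q_j^k)\,v^r(Q_j^k)/|Q_j^k|$ with massive overlap in $k$, and the real work of the proof is the Sawyer/Muckenhoupt--Wheeden machinery that controls it: the principal-cubes construction (Claims~1 and~3), the estimate $a^{kr}\lesssim\frac{1}{|\tilde Q_i^k|}\int_{\tilde Q_i^k}\Phi(f/t)v^r$ (Claim~2, where your decoupling idea appears, refined by a generalized H\"older step with exponent $1+\e$ and a limit $\e\to0$), and the final pointwise bound $h(x)\leq C[u]_{A_1}u(x)$ using $u\in A_1\subset A_\infty$. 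None of this summation control is present in your proposal, and without it (or some substitute handling the failure of $A_\infty$ for $uv^r$) the argument does not close.
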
 

\begin{obs}
	The family of Young functions $\Phi_0(t)=t^r(1+\log^+ t)^{\delta}$ with $r\geq 1$ and $\delta\geq 0$ belongs to $\mathcal{F}_r$. Moreover, for these type of functions the aforementioned result is an improvement of \eqref{mixta_para_M_Phi_version1} in two senses: the inequality involved  is homogeneous in $v$ and on the other hand it is suitable in order to obtain the boundedness of $M_\Phi$. (see $\S$\ref{seccion: interpolacion y aplicaciones}).
	
	Many other examples can be given. As we said above, the functions $\Phi_0(t)=t^r(1+\log^+t)^{\delta}$ belong to $\mathcal{F}_r$. Also, we can  consider the function defined by
	\[\Phi(t)=\left\{\begin{array}{ccl}
	t^q& \textrm{ if } & 0\leq t\leq 1,\\
	t^r\log(e+\log(e+t))^{\delta} & \textrm{ if } & t>1.
	\end{array}
	\right.,\]
	where $q\geq r$, and $\delta\geq 0$. These functions are also in $\mathcal{F}_r$. This example includes combination of power functions when $\delta=0$, or power and $L \log\log L$ functions, if $\delta>0$. 
\end{obs}

The remainder of this paper is organized as follows. In $\S$1 we give the preliminaries and basic definitions. In $\S$2 we prove the main result and finally,  in $\S$3, we use interpolation techniques for modular type inequalities and the main result to give an alternative proof of the boundedness of the maximal operator $M_\Phi$.

\section{Preliminaries and basic definitions} 

We shall use the notation $A\lesssim B$ to mean that there exists a positive constant $C$ such that $A\leq C B$. The constant $C$ may change on each occurrence. We say that $A\approx B$ if $A\lesssim B$ and $B\lesssim A$.

Given a function $\varphi$, we will say that $f\in L^{\varphi}_{loc}$ if $\varphi(|f|)$ is locally integrable. When we consider the function $\varphi(t)=t$, the corresponding space is the usual $L^1_{loc}$.

By a \emph{weight} $w$ we mean function that is locally integrable, positive and finite in almost every $x$. Given $1<p<\infty$, the $A_p$-Muckenhoupt class is defined to be the set of weights $w$ that verify 
\[\left(\frac{1}{|Q|}\int_Qw\,dx\right)\left(\frac{1}{|Q|}\int_Qw^{1-p'}\,dx\right)^{p-1}\leq C,\]
for some positive constant $C$ and for every cube $Q\subseteq \R^n$. By a cube $Q$ we understand a cube in $\R^n$ with sides parallel to the coordinate axes.
If $p=1$ we say that $w\in A_1$ if there exists a positive constant $C$ such that for every cube $Q$
\[\frac{1}{|Q|}\int_Qw\,dx\leq C\inf_Qw.\]

Finally, the $A_\infty$ class is defined as the collection of all the $A_p$ classes, that is, $A_\infty=\bigcup_{p\geq 1}A_p$. It is well known that the $A_p$ classes are increasing on $p$, that is, if $p\leq q$ then $A_p\subseteq A_q$.  For further details and other properties of weights see \cite{javi} or \cite{grafakos}.

There are many conditions that characterize $A_\infty$. In this paper we will use the following one: $w\in A_\infty$ if there exist positive constants $C$ and $\e$ such that, for every cube $Q\subseteq \R^n$ and every measurable set $E\subseteq Q$ the condition
\[\frac{w(E)}{w(Q)}\leq C\left(\frac{|E|}{|Q|}\right)^{\e}\]
holds, where $w(E)=\int_E w$.

The smallest constants $C$ for which the corresponding inequalities above hold are denoted by $[w]_{A_p}$, $1\leq p\leq \infty$ and called the $A_p$ constants of $w$.

An important property of  Muckenhoupt weights is the \emph{reverse H\"{o}lder condition}. This means that given
$w\in A_p$, for some $1\leq p<\infty$, there exist positive constants $C$
and $s>1$ that depend only on the dimension $n$, $p$ and
$[w]_{A_p}$, such that 
\begin{equation*}
\left(\frac{1}{|Q|}\int_Q w^s(x)\,dx\right)^{1/s}\leq
\frac{C}{|Q|}\int_Q w(x)\,dx
\end{equation*}
for every cube $Q$. We write $w\in
\textrm{RH}_s$ to point out that the inequality above holds,
and we denote by $[w]_{\textrm{RH}_s}$ the smallest constant $C$ for
which this condition holds. A weight $w$ belongs to RH$_\infty$ if
there exists a positive constant $C$ such
that\begin{equation*}
\sup_Q w\leq\frac{C}{|Q|}\int_Q w,
\end{equation*}
for every $Q\subset \R^n$. Let us observe that
$\textrm{RH}_\infty\subseteq \textrm{RH}_s\subseteq \textrm{RH}_q$,
for every $1<q<s\leq \infty$.\\

Given a locally integrable function $f$, the \emph{Hardy-Littlewood maximal operator} is defined by
\[Mf(x)=\sup_{Q\ni x}\frac{1}{|Q|}\int_Q |f(y)|\,dy.\]

We say that $\varphi:[0,\infty)\to[0,\infty)$ is a \emph{Young function} if it is convex, increasing, $\varphi(0)=0$ and $\varphi(t)\to\infty$ when $t\to\infty$. Given a Young function $\varphi$, the maximal operator $M_\varphi$ is defined, for $f\in L^\varphi_{\textit{loc}}$, by
\[M_\varphi f(x)=\sup_{Q\ni x}\norm{f}_{\varphi,Q},\]
where $\norm{f}_{\varphi,Q}$ denotes the \emph{Luxemburg type average} of the function $f$ in the cube $Q$, which is defined as follows
\[\norm{f}_{\varphi,Q}=\inf\left\{\lambda>0 : \frac{1}{|Q|}\int_Q\varphi\left(\frac{|f(y)|}{\lambda}\right)\,dy\leq 1 \right\}.\]

We also define the \emph{weighted Luxemburg type average}
 $\norm{f}_{\varphi,Q,w}$ by
 \[\norm{f}_{\varphi,Q,w}=\inf\left\{\lambda>0 : \frac{1}{w(Q)}\int_Q\varphi\left(\frac{|f(y)|}{\lambda}\right)w(y)\,dy\leq 1 \right\}.\]

If $w\in A_\infty$ then $d\mu=w(x)\,dx$ is a doubling measure. Thus, by following the same arguments as in the result of Krasnosel'ski{\u\i} and Ruticki{\u\i} (\cite{KR}, see also \cite{raoren}) we can get that
\begin{equation}\label{equivalencia normal Luxemburo con infimo}
\|f\|_{\Phi,Q,w}\approx\inf_{\tau>0}\left\{\tau+\frac{\tau}{w(Q)}\int_{Q}\Phi\left(\frac{|f|}{\tau}\right)w\,dx\right\}.
\end{equation}

Given a Young function $\varphi$,
we use $\tilde{\varphi}$ to denote the
\emph{complementary Young function} associated to $\varphi$, defined
for $t\geq 0$ by
\[\tilde{\varphi}(t)=\sup\{ts-\varphi(s):s\geq 0\}.\]
It is well known in the literature that $\tilde{\varphi}$ satisfies
\[t\leq \varphi^{-1}(t)\tilde{\varphi}^{-1}(t)\leq 2t,\quad\forall t>0,\]
where $\varphi^{-1}$ denotes the \emph{generalized inverse} of $\varphi$, defined by
\[\varphi^{-1}(t)=\inf\{s>0: \varphi(s)>t\}.\]

For a Muckenhoupt weight $w$ and a Young function $\varphi$ we have the following
\emph{generalized H\"{o}lder inequality}
\begin{equation}\label{Hölder generalizada}
\frac{1}{w(Q)}\int_Q |fg| w\,dx\leq C \|f\|_{\varphi,Q,w}\|g\|_{\bar{\varphi},Q,w}.
\end{equation}

We say that a Young function $\varphi$ has \textit{lower type} $q$, $0<q<\infty$ if there exists a positive constant $C_q$ such that
\[\varphi(st)\leq C_qs^q\varphi(t),\]
for every $0<s\leq 1$ and $t>0$. As an immediate consequence of this definition we have that, if $\varphi$ has lower type $q$ then $\varphi$ has lower type $p$, for every $0<p<q$.

Given a Young function $\varphi$ and $1<p<\infty$ we say that $\varphi$ satisfies the $B_p$ condition and denote it by $\varphi\in B_p$ if there exists a positive constant $c$ such that 
\begin{equation}\label{condicion Bp}
\int_c^\infty \frac{\varphi(t)}{t^p}\,\frac{dt}{t}<\infty.
\end{equation}


A \emph{dyadic grid} $\mathcal{D}$ will be understood as a collection of cubes of $\R^n$ that satisfies the following properties:
\begin{enumerate}
	\item every cube $Q$ in $\mathcal{D}$ has side length $2^k$, for some $k\in\Z$;
	\item if $P\cap Q\neq\emptyset$ then $P\subseteq Q$ or $Q\subseteq P$;
	\item $\mathcal{D}_k=\{Q\in\mathcal{D}: \ell(Q)=2^k\}$ is a partition of $\R^n$ for every $k\in\Z$, where $\ell(Q)$ denotes the side length of $Q$.
\end{enumerate}

To a given dyadic grid $\mathcal{D}$ we can associate the corresponding maximal operator $M_{\varphi,\mathcal{D}}$ defined similarly as above, but where the supremum is taken over all cube in $\mathcal{D}$. When $\varphi(t)=t$, we will simply denote  this operator with $M_{\mathcal{D}}$.

\medskip

The next result will be useful in our estimates. A proof can be found in \cite{Okikiolu}.

\begin{teo}\label{teo_control_diadico}
	There exist dyadic grids $\mathcal{D}^{(i)}$, $1\leq i\leq 3^n$ such that, for every cube $Q\subseteq\R^n$, there exist $i$ and $Q_0\in \mathcal{D}^{(i)}$ satisfying $Q\subseteq Q_0$ and $\ell(Q_0)\leq 3\ell(Q)$.
\end{teo}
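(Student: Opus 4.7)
The plan is to adapt the classical Sawyer scheme to the Orlicz setting. By positive homogeneity of the Luxemburg norm, and hence of $M_\Phi$, the substitution $f\mapsto f/t$ reduces the problem to $t=1$, so the goal becomes $uv^r(\Omega)\lesssim\int\Phi(|f|)uv^r\,dx$, where $\Omega=\{M_\Phi(fv)>v\}$. Theorem~\ref{teo_control_diadico} yields a pointwise bound $M_\Phi(fv)(x)\lesssim\sum_{i=1}^{3^n}M_{\Phi,\mathcal{D}^{(i)}}(fv)(x)$ (from the fact that $\Phi(\alpha t)\lesssim\Phi(t)$ for any fixed $\alpha$, as $\Phi$ is convex with $\Phi(0)=0$ and has lower type $r$), reducing matters to the dyadic operator $M_{\Phi,\mathcal{D}}$ for a single dyadic grid $\mathcal{D}$.

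If $x\in\{M_{\Phi,\mathcal{D}}(fv)>v\}$ then some dyadic cube $Q\ni x$ satisfies $\|fv\|_{\Phi,Q}>v(x)\geq\inf_Q v$. Since $f$ is bounded with compact support, $\|fv\|_{\Phi,Q}\to 0$ as $\ell(Q)\to\infty$, so there exist \emph{maximal} dyadic cubes $\{Q_j\}\subseteq\mathcal{D}$ with $\|fv\|_{\Phi,Q_j}>\inf_{Q_j}v$. These are pairwise disjoint and cover the level set. By the definition of the Luxemburg norm, this stopping condition reads
\[|Q_j|<\int_{Q_j}\Phi\!\left(\frac{|fv|}{\inf_{Q_j}v}\right)dx.\]

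Next, the reverse Hölder inequalities satisfied by $u,v^r\in A_1$, chosen with compatible exponents, together with Hölder's inequality give $uv^r(Q_j)\lesssim|Q_j|\cdot\inf_{Q_j}u\cdot\inf_{Q_j}v^r$. Inserting the displayed stopping bound, using the submultiplicativity of $\Phi$, the growth hypothesis \eqref{propiedad acotacion de Phi} (which together with the lower type $r$ gives $\Phi(v/\inf_{Q_j}v)\lesssim(v/\inf_{Q_j}v)^r(1+\log^+(v/\inf_{Q_j}v))^\delta$), the identity $\inf_{Q_j}v^r=(\inf_{Q_j}v)^r$, and finally $\inf_{Q_j}u\leq u(x)$ a.e.\ to bring $u$ inside the integral, one obtains
\[uv^r(Q_j)\lesssim\int_{Q_j}\Phi(|f|)\,uv^r\bigl(1+\log^+(v/\inf_{Q_j}v)\bigr)^\delta dx.\]

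The hard part will be absorbing the logarithmic factor, which is not pointwise bounded on $Q_j$. The plan is to perform a secondary Calderón--Zygmund decomposition inside each $Q_j$ against the $A_1$-weight $v^r$: let $\mathcal{P}_m$ consist of the maximal dyadic subcubes $P\subseteq Q_j$ with $(v^r)_P>2^m\inf_{Q_j}v^r$. A standard count gives $\sum_{P\in\mathcal{P}_m}|P|\lesssim 2^{-m}|Q_j|$, and on $\bigcup\mathcal{P}_m\setminus\bigcup\mathcal{P}_{m+1}$ Lebesgue differentiation forces $v/\inf_{Q_j}v\lesssim 2^{(m+1)/r}$ a.e., so the logarithmic factor is at most $C(m+1)^\delta$ there. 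Using the reverse Hölder self-improvement of $v^r\in A_1\subseteq \mathrm{RH}_\sigma$ to upgrade the Lebesgue-measure decay $2^{-m}|Q_j|$ into geometric decay of the form $\int_{\bigcup\mathcal{P}_m}\Phi(|f|)uv^r\,dx\lesssim 2^{-m\alpha}\int_{Q_j}\Phi(|f|)uv^r\,dx$ for some $\alpha>0$, and then summing the convergent series $\sum_m(1+m)^\delta 2^{-m\alpha}$, closes the estimate to $uv^r(Q_j)\lesssim\int_{Q_j}\Phi(|f|)uv^r\,dx$. Summing this over the disjoint $\{Q_j\}$ and then over the $3^n$ dyadic grids $\mathcal{D}^{(i)}$ yields the theorem.
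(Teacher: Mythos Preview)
Your proposal does not address the stated theorem at all. Theorem~\ref{teo_control_diadico} is the standard ``one-third trick'' asserting the existence of $3^n$ shifted dyadic grids such that every cube is contained in a comparable dyadic cube from one of them. The paper does not prove this result; it simply cites \cite{Okikiolu}. What you have written is instead a sketch of a proof of Theorem~\ref{teorema principal}, the main mixed weak-type estimate, in which Theorem~\ref{teo_control_diadico} is merely invoked as a tool.

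Even viewed as an attempt at Theorem~\ref{teorema principal}, the sketch has genuine gaps. First, the claimed bound $uv^r(Q_j)\lesssim|Q_j|\,\inf_{Q_j}u\,\inf_{Q_j}v^r$ would amount to $uv^r\in A_1$, which is false in general for $u,v^r\in A_1$ (take $u=v^r=|x|^{-\alpha}$ with $n/2\le\alpha<n$); your appeal to ``compatible'' reverse H\"older exponents presupposes $u\in\mathrm{RH}_p$ and $v^r\in\mathrm{RH}_{p'}$ for some conjugate pair, which is not guaranteed. Second, the ``upgrade'' step is unjustified: reverse H\"older for $v^r$ controls $v^r(\bigcup\mathcal{P}_m)$ in terms of $|\bigcup\mathcal{P}_m|$, but it says nothing about $\int_{\bigcup\mathcal{P}_m}\Phi(|f|)\,u\,v^r\,dx$, since $\Phi(|f|)u$ may concentrate precisely on those small sets. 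This interaction between $u$ and the decomposition in $v$ is exactly why the paper (following Sawyer) needs the two-parameter family $\Omega_k$, the index set $\Gamma$, and the ``principal cubes'' selection governed by the modified averages $\mu(Q^t_s)=a^{-\beta t}|Q^t_s|^{-1}\int_{Q^t_s}u$; a single stopping-time on $\|fv\|_{\Phi,Q}$ against $\inf_Q v$ does not suffice.
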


From the theorem above, we obtain that
\begin{equation}\label{control diadico de M phi}
M_{\Phi}f(x)\leq C\s{i=1}{3^n}M_{\Phi,\mathcal{D}^{(i)}}f(x).
\end{equation}
Indeed, fix $x\in\R^n$ and $Q$ a cube containing $x$. By Theorem~\ref{teo_control_diadico} we have a dyadic grid $\mathcal{D}^{(i)}$ and $Q_0\in \mathcal{D}^{(i)}$ with the desired properties. Then,
\[\frac{1}{|Q|}\int_Q \Phi\left(\frac{|f(y)|}{\norm{f}_{\Phi,Q_0}}\right)\,dy \leq\frac{|Q_0|}{|Q|}\frac{1}{|Q_0|}\int_{Q_0}\Phi\left(\frac{|f(y)|}{\norm{f}_{\Phi,Q_0}}\right)\,dy
\leq 3^n,\]
so
\[\norm{f}_{\Phi,Q}\leq 3^n\norm{f}_{\Phi,Q_0}\leq 3^n M_{\Phi, \mathcal{D}^{(i)}}(f)(x)\leq 3^n\s{i=1}{3^n}M_{\Phi, \mathcal{D}^{(i)}}(f)(x).\]
Thus, by taking supremum over all cubes $Q$ that contain $x$ we have the desired estimate. From \eqref{control diadico de M phi}, it will be sufficient to prove Theorem~\ref{teorema principal} for $M_{\Phi,\mathcal{D}}$, for a general dyadic grid $\mathcal{D}$.


\section{Proof of the main result}

We devote this section to proving Theorem~\ref{teorema principal}. We shall split some parts into several claims that will be proved separately for the sake of simplicity. 

First, we shall give some lemmas that will be useful in the proof of our main result.

\begin{lema}\label{descomposicion_de_CZ_del_espacio}
	Given $\lambda>0$, a bounded function with compact support  $f$, a dyadic grid $\mathcal{D}$ and a Young function $\varphi$, there exists a family of dyadic cubes $\{Q_j\}$ of $\mathcal{D}$ that satisfies
	\[\{x\in\R^n: M_{\varphi,\mathcal{D}}f(x)>\lambda\}=\bigcup_j Q_j,\]
	and $\norm{f}_{\varphi,Q_j}>\lambda$  for every $j$.
\end{lema}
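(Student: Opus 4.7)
The plan is to perform the standard maximal-dyadic-cube selection, adapted to the Luxemburg average. First I would fix the level set $E_\lambda=\{x\in\R^n : M_{\varphi,\mathcal{D}}f(x)>\lambda\}$ and note that, by the very definition of $M_{\varphi,\mathcal{D}}$, for each $x\in E_\lambda$ there exists at least one dyadic cube $Q\ni x$ with $\|f\|_{\varphi,Q}>\lambda$. The family of dyadic cubes containing a fixed $x$ is totally ordered by inclusion, so it is natural to try to choose, for each $x$, a \emph{maximal} such cube $Q_x$. The collection $\{Q_j\}$ will then be the family of distinct cubes obtained as $x$ varies over $E_\lambda$.

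To guarantee that the maximal cube $Q_x$ actually exists, I would use the hypothesis that $f$ is bounded with compact support. Say $|f|\le M$ and $\supp f\subseteq B(0,R)$. For any cube $Q$ one has
\[\frac{1}{|Q|}\int_Q\varphi\!\left(\frac{|f(y)|}{\lambda}\right)dy\le \frac{|B(0,R)|}{|Q|}\,\varphi\!\left(\frac{M}{\lambda}\right),\]
and the right-hand side tends to $0$ as $|Q|\to\infty$. Hence, once $|Q|$ is sufficiently large this average is $\le 1$, forcing $\|f\|_{\varphi,Q}\le \lambda$. Thus along every ascending chain of dyadic ancestors of $x$, only finitely many cubes can satisfy $\|f\|_{\varphi,Q}>\lambda$, and a maximal element $Q_x$ of this finite sub-chain exists.

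Finally, I would verify the two inclusions. Each $x\in E_\lambda$ lies in its own $Q_x$, so $E_\lambda\subseteq\bigcup_j Q_j$. Conversely, if $y\in Q_j$ then $Q_j$ is a dyadic cube containing $y$ with $\|f\|_{\varphi,Q_j}>\lambda$, and therefore $M_{\varphi,\mathcal{D}}f(y)\ge\|f\|_{\varphi,Q_j}>\lambda$, giving $\bigcup_j Q_j\subseteq E_\lambda$. The condition $\|f\|_{\varphi,Q_j}>\lambda$ is built into the construction. Disjointness of the $Q_j$ is not requested, but would in fact follow from maximality together with the nested/disjoint property of dyadic cubes: two maximal cubes meeting at a point would be comparable, and the larger would contradict maximality of the smaller.

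The only genuine obstacle is ensuring the existence of the maximal cube, which is exactly where the bounded-compact-support assumption on $f$ is used to cap the admissible side lengths. Once that cap is in place, the remainder is pure dyadic bookkeeping.
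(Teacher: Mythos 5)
Your construction is correct and is essentially the argument the paper relies on (it only cites \cite[Lemma 5]{Berra}): select, for each point of the level set, the maximal dyadic cube with $\norm{f}_{\varphi,Q}>\lambda$, the existence of such a maximal cube being guaranteed because the bounded, compactly supported $f$ forces $\norm{f}_{\varphi,Q}\leq\lambda$ once $|Q|$ is large. This matches the paper's remark that the cubes $Q_j$ are maximal with respect to inclusion, so no further comment is needed.
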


A proof of this lemma can be found in \cite[Lemma 5]{Berra}. Notice that the cubes $Q_j$ are maximal in the sense of inclusion, that is, if $Q_j\subsetneq Q'$ for a fixed $j$, then $\norm{f}_{\Phi,Q'}\leq \lambda$.

\medskip

\begin{lema}\label{estimacion constante epsilon}
	Let $f$ be the function defined in $[0,\infty)$ by 
	\[f(x)=\left\{\begin{array}{rcl}
	\left(1+\frac{1}{x}\right)^{\frac{x}{1+x}},& \textrm{ if } & x>0;\\
	1, & \textrm{ if } & x=0.
	\end{array}
	\right.\]
	Then  $1\leq f(x)\leq e^{1/e}$, for every $x\geq 0$.
\end{lema}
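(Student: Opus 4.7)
The plan is to reduce everything to the classical one-variable optimization of $h(u)=\ln u/u$. Taking logarithms, one has for $x>0$
\[
\ln f(x)=\frac{x}{1+x}\,\ln\!\left(1+\frac{1}{x}\right).
\]
Both factors on the right are strictly positive for $x>0$, so $\ln f(x)\geq 0$, which gives the lower bound $f(x)\geq 1$ immediately (the value at $x=0$ is $1$ by definition).

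For the upper bound I would make the change of variables $u=1+1/x$, which sends $(0,\infty)$ bijectively onto $(1,\infty)$. Then $\frac{x}{1+x}=\frac{1}{1+1/x}=\frac{1}{u}$ and $\ln(1+1/x)=\ln u$, so
\[
\ln f(x)=\frac{\ln u}{u}=h(u).
\]
The function $h$ is smooth on $(0,\infty)$ with $h'(u)=(1-\ln u)/u^{2}$, hence $h$ is increasing on $(0,e)$, decreasing on $(e,\infty)$, and attains its unique maximum at $u=e$ with $h(e)=1/e$. Therefore $\ln f(x)\leq 1/e$ for every $x>0$, i.e., $f(x)\leq e^{1/e}$.

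The only points that deserve a line of care are the boundary behaviors: as $x\to 0^{+}$ one has $u\to\infty$, so $h(u)\to 0$; as $x\to\infty$ one has $u\to 1^{+}$, so $h(u)\to 0$; and the prescribed value $f(0)=1$ matches the limit $\lim_{x\to 0^{+}}f(x)=e^{0}=1$. There is no real obstacle here — the statement is essentially the classical fact $\sup_{u>0}(\ln u)/u=1/e$ dressed up in the variable $x$, and the main thing is to recognize the substitution $u=1+1/x$ that unmasks it.
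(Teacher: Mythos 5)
Your proof is correct and follows essentially the same route as the paper: a one-variable calculus optimization showing the maximum value is $e^{1/e}$, attained where $\log(1+1/x)=1$, i.e.\ $x=1/(e-1)$ (your $u=e$). The substitution $u=1+1/x$ reducing the problem to $\sup_{u>1}(\log u)/u=1/e$ is a cosmetic streamlining of the paper's direct differentiation of $f$, and your observation that $\log f\geq 0$ gives the lower bound a bit more directly than the paper's appeal to the boundary limits.
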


\begin{proof}
	Observe that
	\[f'(x)=f(x)\frac{1}{(1+x)^2}\left(\log\left(1+\frac{1}{x}\right)-1\right) \quad \textrm{if } \quad x>0.\]
	It is easy to see that $f$ has a local maximum at $x=1/(e-1)$ and  $f(1/(e-1))=e^{1/e}$. On the other hand,  
	\[\lim_{x\to 0^+}f(x)=\lim_{x\to\infty}f(x)=1,\]  
	which directly implies the thesis.
\end{proof}

\begin{proof}[Proof of Theorem~\ref{teorema principal}]
	
	Fix $t>0$, a dyadic grid $\mathcal{D}$ and denote $g=fv/t$. Then, it will be enough to prove that
	\[uv^r\left(\left\{x\in \R^n: M_{\Phi,\mathcal{D}}(g)(x)>v(x)\right\}\right)\leq C\int_{\R^n}\Phi\left(\frac{f}{t}\right)\,uv^r\,dx.\]
	We can assume, without loss of generality, that $g$ is a bounded function with compact support. Fix a number $a>2^n$ and, for every $k\in\Z$ we will define the set
	\[\Omega_k=\{x\in\R^n: M_{\mathcal{D}}v(x)>a^k\}\cap\{x\in\R^n: M_{\Phi,\mathcal{D}}g(x)>a^k\},\]
	which can be written as a disjoint union of maximal dyadic cubes $\{Q^k_j\}_j$, for every $k$, by virtue of Lemma~\ref{descomposicion_de_CZ_del_espacio}. 
	
	Let us now consider the set $\Gamma=\{(k,j): |Q^k_j\cap\{x: v(x)\leq a^{k+1}\}|>0\}$. Therefore, since $v\in A_1$, for $(k,j)\in\Gamma$ we have that
	\begin{equation}\label{propiedad_cubos_para_v}
	\frac{a^k}{[v]_{A_1}}\leq \frac{1}{[v]_{A_1}}\inf_{Q^k_j}M_\mathcal{D}v\leq \inf_{Q^k_j}v\leq \frac{1}{|Q^k_j|}\int_{Q^k_j}v\leq [v]_{A_1}\inf_{Q^k_j}v\leq[v]_{A_1}a^{k+1}.
	\end{equation}
	
	Since $v^r\in A_1$, we also have
	\[\frac{1}{|Q^k_j|}\int_{Q^k_j}v\leq  \left(\frac{1}{|Q^k_j|}\int_{Q^k_j}v^r\right)^{1/r}\leq [v^r]_{A_1}^{1/r}\frac{1}{|Q^k_j|}\int_{Q^k_j}v.\]
	By combining the estimate above with \eqref{propiedad_cubos_para_v} we get
	\begin{equation}\label{propiedad_cubos_para_v^r}
	\frac{a^{kr}}{[v]_{A_1}^r}\leq\frac{1}{|Q^k_j|}\int_{Q^k_j}v^r\leq [v^r]_{A_1}[v]_{A_1}^r a^r a^{kr}.
	\end{equation}
	
	Now observe that if we set $A_k=\left\{x: a^k<v(x)\leq a^{k+1}\right\}$, then for every $k$ we have that
	\begin{align*}
	A_k\cap\{ M_{\Phi,\mathcal{D}}g>v\}&\subseteq\left\{ M_\mathcal{D}v>a^k\right\}\cap\left\{ v\leq a^{k+1}\right\}\cap\left\{ M_{\Phi,\mathcal{D}}g>a^k\right\}\\
	&\subseteq\bigcup_{j: (k,j)\in \Gamma}Q^k_j,
	\end{align*}
	except for a set of null measure. Thus,
	\begin{align*}
	uv^r(\{x\in\R^n: M_{\Phi,\mathcal{D}}g>v\})&=\s{k\in\Z}{}uv^r(\{M_{\Phi,\mathcal{D}}g>v\}\cap A_k)\\
	&\leq a^r\s{k\in\Z}{}a^{kr}u(\{M_{\Phi,\mathcal{D}}g>v\}\cap A_k)\\
	&\leq a^r\s{k\in\Z}{}\s{j:(k,j)\in\Gamma}{}a^{kr}u(Q^k_j)\\
	&\leq a^r[v]_{A_1}^r\s{k\in\Z}{}\s{j:(k,j)\in\Gamma}{}u(Q^k_j)\frac{v^r(Q^k_j)}{|Q^k_j|},
	\end{align*}
	where we have used \eqref{propiedad_cubos_para_v^r}.
	
	Fix now a negative integer $N$ and define $\Gamma_N=\{(k,j)\in \Gamma: k\geq N\}$. The objective is to prove that there exists a positive constant $C$, independent of $N$ for which
	\[\s{k\geq N}{}{}\s{j:(k,j)\in\Gamma_N}{}u(Q^k_j)\frac{v^r(Q^k_j)}{|Q^k_j|}\leq C\int_{\R^n}\Phi\left(\frac{f}{t}\right)uv^r\,dx.\]
	If we can accomplish this estimate, the result will follow by letting $N\to-\infty$.
	
	\medskip
	
	Let $\Delta_N=\{Q^k_j: (k,j)\in \Gamma_N\}$. Given two cubes in $\Delta_N$ either they are disjoint, or one is contained in the other. Also observe that, if $k>t$,
	$\Omega_k\subseteq \Omega_t$. Thus, if the cubes $Q^k_j$ and $Q_s^t$ verify $Q^k_j\cap Q_s^t\neq\emptyset$, then necessarily we must have  $Q^k_j\subseteq Q_s^t$.
	
	Since $v^r\in A_1\subset A_{\infty}$, there exist positive constants $[v^r]_{A_\infty}$ and $\eta$ such that, for every cube $Q\subset \R^n$ and $E$ a measurable subset of $Q$
	\[\frac{v^r(E)}{v^r(Q)}\leq [v^r]_{A_\infty}\left(\frac{|E|}{|Q|}\right)^\eta.\] 
	
	Let $0<\beta<\eta$ and define inductively a sequence of sets as follows: 
	\[G_0=\{(k,j)\in \Gamma_N: Q^k_j \textrm{ is maximal in }\Delta_N\},\]
	and, in a colloquial way, a pair $(k,j)$ in $\Gamma_N$ belongs to $G_{n+1}$ if the cube $Q^k_j$ has an ``ancestor''  $Q_s^t$, with $(t,s)\in G_n$, and
	$Q^k_j$ is the ``first descendant'' in $\Gamma_N$ that satisfies $\mu(Q_j^k)>\mu(Q_s^t)$, in the sense that $\mu(Q_i^\ell)\leq\mu(Q_s^t)$ for each $(\ell,i)\in\Gamma_N$ and $Q^k_j\subsetneq Q_i^\ell\subseteq Q_s^t$, where  $\mu(Q_s^t)$ is the modified average
	\[\mu(Q_s^t):= \frac{1}{a^{\beta  t}}\frac{1}{|Q_s^t|}\int_{Q_s^t}u(x)\,dx.\]
	 That is, we define for $n\geq 0$, $G_{n+1}$ as the set of pairs $(k,j)\in \Gamma_N$ for which there exists $(t,s)\in G_n$ with $Q_j^k\subsetneq Q_s^t$ and the inequalities
	
	\begin{equation}\label{desigualdad1_conjuntoGn}
	\frac{1}{|Q^k_j|}\int_{Q^k_j}u(x)\,dx>a^{(k-t)\beta } \frac{1}{|Q_s^t|}\int_{Q_s^t}u(x)\,dx,
	\end{equation}
	and
	\begin{equation}\label{desigualdad2_conjuntoGn}
	\frac{1}{|Q_i^\ell|}\int_{Q_i^\ell}u(x)\,dx\leq a^{(\ell-t)\beta } \frac{1}{|Q_s^t|}\int_{Q_s^t}u(x)\,dx
	\end{equation}
	hold with $(\ell,i)\in\Gamma_N$ and $Q^k_j\subsetneq Q_i^\ell\subseteq Q_s^t$.

	Observe that, if $G_{n_0}=\emptyset$ for some $n_0$, then $G_{n}=\emptyset$ for every $n\geq n_0$.
	Let $P=\bigcup_{n\geq 0}G_n$. If $(t,s)\in P$ we will say that $Q_s^t$ is a \textit{principal cube}.
	
	We now state some claims whose proofs will be given at the end of this section.
	\begin{afirmacion}\label{afirmacion1}
		There exists a positive constant $C$ such that
		\[\s{(k,j)\in\Gamma_N}{}\frac{v^r(Q^k_j)}{|Q^k_j|}u(Q^k_j)\leq C \s{(k,j)\in P}{}\frac{v^r(Q^k_j)}{|Q^k_j|}u(Q^k_j).\]
	\end{afirmacion}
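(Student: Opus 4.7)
My plan is to reduce the claim to a per-principal-cube estimate. For each principal $(t,s)\in P$, let $\mathcal{T}((t,s))$ denote the family of non-principal pairs $(\ell,i)\in\Gamma_N$ whose smallest principal ancestor is $(t,s)$, so that $\Gamma_N\setminus P$ is the disjoint union of the $\mathcal{T}((t,s))$. It is then enough to establish, for each $(t,s)\in P$,
\[\sum_{(\ell,i)\in\mathcal{T}((t,s))}\frac{v^r(Q^\ell_i)}{|Q^\ell_i|}u(Q^\ell_i)\leq C\,\frac{v^r(Q^t_s)}{|Q^t_s|}u(Q^t_s),\]
with a constant $C$ independent of $(t,s)$; summing this over $P$ and adding back the trivial contribution of the principal cubes themselves yields the claim.

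To prove this inner bound I would group the sum by level $\ell>t$. For fixed $\ell$ the cubes $\{Q^\ell_i\}_{i:\,(\ell,i)\in\mathcal{T}((t,s))}$ are pairwise disjoint subsets of $Q^t_s$, and \eqref{desigualdad2_conjuntoGn} provides the uniform bound $u(Q^\ell_i)/|Q^\ell_i|\leq a^{\beta(\ell-t)}u(Q^t_s)/|Q^t_s|$. Factoring that quantity out yields
\[\sum_{i}\frac{v^r(Q^\ell_i)}{|Q^\ell_i|}u(Q^\ell_i)\leq a^{\beta(\ell-t)}\frac{u(Q^t_s)}{|Q^t_s|}\,v^r\Big(\bigcup_{i}Q^\ell_i\Big).\]
The crux is to extract from the factor $v^r(\bigcup_i Q^\ell_i)$ enough decay in $\ell-t$ to beat the growth $a^{\beta(\ell-t)}$. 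This is where I would use both hypotheses on $v$: the pointwise inclusions $\bigcup_i Q^\ell_i\subseteq\{M_{\mathcal{D}}v>a^\ell\}\subseteq\{v>a^\ell/[v]_{A_1}\}$ (from $v\in A_1$), combined with the upper bound $v^r(Q^t_s)\lesssim a^{tr}|Q^t_s|$ from \eqref{propiedad_cubos_para_v^r}, yield the measure decay $|\bigcup_i Q^\ell_i|\lesssim a^{-(\ell-t)r}|Q^t_s|$, and feeding this into the $A_\infty$ condition for $v^r$ with exponent $\eta$ gives
\[v^r\Big(\bigcup_i Q^\ell_i\Big)\leq [v^r]_{A_\infty}\Big(\frac{|\bigcup_i Q^\ell_i|}{|Q^t_s|}\Big)^{\eta}v^r(Q^t_s)\lesssim a^{-(\ell-t)r\eta}\,v^r(Q^t_s).\]

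Combining these two estimates I would obtain the level-$\ell$ inequality
\[\sum_{i}\frac{v^r(Q^\ell_i)}{|Q^\ell_i|}u(Q^\ell_i)\lesssim a^{(\ell-t)(\beta-r\eta)}\,\frac{v^r(Q^t_s)u(Q^t_s)}{|Q^t_s|},\]
and summing over $\ell>t$ would reduce the problem to a geometric series in $a^{\beta-r\eta}$. Since $0<\beta<\eta$ and $r\geq 1$ force $\beta-r\eta<0$, the series converges to a constant that is independent of $(t,s)$ and of $N$, completing the argument. I expect the main obstacle to be precisely this extraction of decay: the trivial disjointness bound $v^r(\bigcup_i Q^\ell_i)\leq v^r(Q^t_s)$ would leave the divergent factor $a^{\beta(\ell-t)}$ uncompensated, and it is the interplay between the $A_1$ property of $v$ (which turns the level-set description of $\Omega_\ell$ into measure decay) and the $A_\infty$ property of $v^r$ (which converts measure decay into $v^r$-mass decay) that is essential.
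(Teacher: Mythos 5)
Your proposal is correct and follows essentially the same route as the paper: group the pairs of $\Gamma_N$ by their smallest principal ancestor, use \eqref{desigualdad2_conjuntoGn} to control the $u$-averages, and combine the disjointness of the level-$\ell$ cubes inside $\{M_{\mathcal{D}}v>a^\ell\}$ with the $A_1$ property of $v$ and the $A_\infty$ property of $v^r$ to produce a convergent geometric series beating $a^{\beta(\ell-t)}$. The only cosmetic difference is that you apply Chebyshev to $v^r$ via \eqref{propiedad_cubos_para_v^r}, obtaining decay $a^{-(\ell-t)r\eta}$, whereas the paper applies it to $v$ via \eqref{propiedad_cubos_para_v} and gets $a^{-(\ell-t)\eta}$; both suffice since $0<\beta<\eta$.
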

	
	For every fixed $k\in\Z$, we consider the family $\{\tilde Q_i^k\}_i$ of maximal dyadic cubes given by Lemma~\ref{descomposicion_de_CZ_del_espacio}, which decompose the set $\{x\in \R^n: M_{\Phi,\mathcal{D}}g(x)>a^k\}$.  Then, for every $i$, it follows that
	\begin{equation}
	a^k<\norm{g}_{\Phi,\tilde Q_i^k} \quad\textrm{ or, equivalently }\quad 1<\norm{\frac{g}{a^k}}_{\Phi,\tilde Q_i^k}.
	\end{equation}
	
	\begin{afirmacion}\label{afirmacion2}
		There exists a positive constant $C$ such that
		\begin{equation}\label{acotacion akr por promedio}
		a^{kr}\leq C\frac{1}{|\tilde Q_i^k|}\int_{\tilde Q_i^k}\Phi\left(\frac{f}{t}\right)v^r(x)\,dx,
		\end{equation}
		 for every cube $\tilde Q_i^k$.
		\end{afirmacion}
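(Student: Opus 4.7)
The plan is to translate the Luxemburg-norm hypothesis $a^k<\norm{g}_{\Phi,\tilde Q_i^k}$ (with $g=fv/t$) into the integral lower bound in the statement, by exploiting the defining properties of the class $\mathcal{F}_r$ together with the $A_1$ structure of $v^r$.

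First I would note that, by the definition of the Luxemburg norm, $a^k<\norm{g}_{\Phi,\tilde Q_i^k}$ is equivalent to
\[\frac{1}{|\tilde Q_i^k|}\int_{\tilde Q_i^k}\Phi\!\left(\frac{fv}{ta^k}\right)dx>1,\]
so that, multiplying through by $a^{kr}$, one obtains $a^{kr}|\tilde Q_i^k|<\int_{\tilde Q_i^k}a^{kr}\Phi(fv/(ta^k))\,dx$. The task therefore reduces to bounding the integrand by $C\,\Phi(f/t)\,v^r$, either pointwise or after integration.

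Writing $fv/(ta^k)=(f/t)(v/a^k)$, I would split $\tilde Q_i^k$ according to whether $v\leq a^k$ or $v>a^k$. On $\{v\leq a^k\}$ the lower type $r$ of $\Phi$, applied with $s=v/a^k\leq 1$, yields the pointwise bound $a^{kr}\Phi(fv/(ta^k))\leq C_r\Phi(f/t)v^r$, which integrates to the desired inequality on this piece. On $\{v>a^k\}$, submultiplicativity of $\Phi$ combined with the growth condition $\Phi(t)\leq C\,t^r(1+\log^+t)^\delta$ (a consequence of $\Phi\in\mathcal{F}_r$) only provides the weaker estimate
\[a^{kr}\Phi\!\left(\frac{fv}{ta^k}\right)\leq C\,\Phi\!\left(\frac{f}{t}\right)v^r\,(1+\log^+(v/a^k))^\delta,\]
where the extra factor $(1+\log^+(v/a^k))^\delta$ is not pointwise controllable.

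The hard part will be absorbing this spurious logarithmic excess on $\{v>a^k\}$. I plan to handle it by combining two ingredients. First, the maximality of $\tilde Q_i^k$ in the dyadic decomposition of $\{M_{\Phi,\mathcal{D}}g>a^k\}$: its dyadic parent $\widehat{\tilde Q_i^k}$ satisfies $\norm{g}_{\Phi,\widehat{\tilde Q_i^k}}\leq a^k$, which through the lower type $r$ of $\Phi$ and the Lebesgue doubling of the parent gives the complementary two-sided control $1<\frac{1}{|\tilde Q_i^k|}\int_{\tilde Q_i^k}\Phi(fv/(ta^k))\,dx\leq 2^n$. Second, the $A_1$ condition $M(v^r)\leq[v^r]_{A_1}v^r$, which prevents $v^r$ from concentrating too much above its local average. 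Combining these two with a careful level-set decomposition of $\{v>a^k\}$ allows the $(1+\log^+(v/a^k))^\delta$ factor to be absorbed into the main term $\int_{\tilde Q_i^k}\Phi(f/t)v^r\,dx$, yielding the claimed inequality.
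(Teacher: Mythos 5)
Your first half is fine and coincides with the paper: on the set where $v\lesssim a^k$ the lower type $r$ of $\Phi$ gives the pointwise bound $a^{kr}\Phi(fv/(ta^k))\leq C_r\Phi(f/t)v^r$, and you correctly isolate the real obstruction, namely the factor $(1+\log^+(v/a^k))^\delta$ produced by submultiplicativity together with \eqref{propiedad acotacion de Phi} on $\{v>a^k\}$. But the resolution of that obstruction is exactly the content of the claim, and your plan for it is an assertion rather than an argument, and the ingredients you name are not the ones that make it work. A level-set decomposition of $\{v>a^k\}$ into layers $\{a^{k+j}<v\leq a^{k+j+1}\}$ only replaces the log by factors $(j+1)^\delta$ that you cannot sum, since $\Phi(f/t)$ may be concentrated precisely where $v/a^k$ is huge; neither the two-sided modular bound $1<|\tilde Q_i^k|^{-1}\int_{\tilde Q_i^k}\Phi(g/a^k)\leq 2^n$ coming from maximality (an upper bound on the very quantity you are trying to bound from below, which does not see the discrepancy between $\Phi(g/a^k)$ and $\Phi(f/t)v^r a^{-kr}$) nor the pointwise $A_1$ inequality $M(v^r)\leq[v^r]_{A_1}v^r$ supplies the needed decay, because the problem is to decouple $\Phi(f/t)$ from the logarithmic weight, not to control $v^r$ alone.

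The paper's mechanism is quite different and is where all the work lies: the bad set $B=\tilde Q_i^k\cap\{v>t_0a^k\}$ is covered by the maximal cubes $Q_\ell^k$ of $\Omega_k$, on which \eqref{propiedad_cubos_para_v} gives $|Q_\ell^k|^{-1}\int_{Q_\ell^k}v\approx a^k$; on each such cube one applies H\"older's inequality with exponents $\gamma=1+\e$, $\gamma'$ with respect to $d\mu=v^r\,dx$, and the $\gamma'$-average of $(\log(v/a^k))^\delta$ is bounded by a constant independent of $\e$ using $\log t\lesssim \xi^{-1}t^\xi$, the reverse H\"older inequality for $v^r$ (self-improvement of $A_1$, not the plain $A_1$ bound), and Lemma~\ref{estimacion constante epsilon} after the careful choice of $\e_0$. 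This leaves $\Phi^\gamma$ instead of $\Phi$, which is then handled by the Luxemburg-norm estimate \eqref{equivalencia normal Luxemburo con infimo} together with the bound $|\tilde Q_i^k|^{-1}\int_{\tilde Q_i^k}v^r\lesssim a^{kr}$ of \eqref{estimacion promedio cubos grandes}, choosing $\tau$ so that the term $C\tau\,v^r(\tilde Q_i^k)/|\tilde Q_i^k|$ is absorbed into $a^{kr}$, and finally letting $\e\to0$ with dominated convergence to return from $\Phi^\gamma$ to $\Phi$. None of this H\"older--absorption--limiting scheme (nor any substitute for it) appears in your proposal, so as it stands the proof of the claim has a genuine gap at its central step.
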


	Since $Q^k_j\subseteq\{x: M_{\Phi,\mathcal{D}}g(x)>a^k\}$, for each $j$ there exists a unique $i=i(j,k)$ for which $Q^k_j\subseteq \tilde Q_i^k$. By applying Claims~\ref{afirmacion1} and ~\ref{afirmacion2} we have that 
	\begin{align*}
	\s{(k,j)\in \Gamma_N}{}\frac{1}{|Q^k_j|}v^r(Q^k_j)u(Q^k_j)&\leq C\s{(k,j)\in P}{}\frac{1}{|Q^k_j|}v^r(Q^k_j)u(Q^k_j)\\
	&\leq C \s{(k,j)\in P}{}a^{kr}u(Q^k_j)\\
	&\leq C\s{(k,j)\in P}{}\frac{u(Q^k_j)}{|\tilde Q_i^k|}\int_{\tilde Q_i^k}\Phi\left(\frac{f}{t}\right)v^r(x)\,dx\\	
	&=C\int_{\R^n}\Phi\left(\frac{f}{t}\right)v^r(x)\left[\s{(k,j)\in P}{}\frac{1}{|\tilde Q_i^k|}u(Q^k_j)\mathcal{X}_{\tilde Q_i^k}(x)\right]\,dx\\
	&=C\int_{\R^n}\Phi\left(\frac{f}{t}\right)h(x)v^r(x)\,dx,
	\end{align*}
	where $h(x)=\s{(k,j)\in P}{}|\tilde Q_i^k|^{-1}u(Q^k_j)\mathcal{X}_{\tilde Q_i^k}(x)$.

	In order to finish, it only remains to show that there exists a positive constant $C$ such that $h(x)\leq Cu(x)$. The proof follows similar lines as in \cite{Sawyer}. We include it for the sake of completeness.
	
	Indeed, given $x\in \R^n$, we can assume that $u(x)<\infty$. For every fixed $k$ there exists, at most, one $\tilde Q_i^k$ which satisfies $x\in \tilde Q_i^k$. If this cube does exist, we denote it by $\tilde Q^k$ and for every $k$ we define $P_k=\{(k,j)\in P: Q^k_j\subseteq \tilde Q^k\}$ and $G=\{k: P_k\neq\emptyset\}$. Recall that $k\geq N$, so $G$ is bounded from below. Let $k_0$ be the minimum of $G$. We shall build a sequence in $G$ in the following way: chosen $k_m$, for $m\geq 0$ we select $k_{m+1}$ as the smallest integer in $G$, greater than $k_m$ and verifying
	\begin{equation}\label{desigualdad1_sucesionkm}
	\frac{1}{|\tilde Q^{k_{m+1}}|}\int_{\tilde Q^{k_{m+1}}}u(y)\,dy>\frac{2}{|\tilde Q^{k_m}|}\int_{\tilde Q^{k_m}}u(y)\,dy.
	\end{equation}
	It is clear that, if $\ell\in G$ and $k_m\leq \ell< k_{m+1}$, then
	\begin{equation}\label{desigualdad2_sucesionkm}
	\frac{1}{|\tilde Q^{\ell}|}\int_{\tilde Q^{\ell}}u(y)\,dy\leq\frac{2}{|\tilde Q^{k_m}|}\int_{\tilde Q^{k_m}}u(y)\,dy.
	\end{equation}
	
	The so-defined sequence $\{k_m\}_{m\geq0}$ has only a finite number of terms. Indeed, if it was not the case, by applying condition \eqref{desigualdad1_sucesionkm} repeatedly,  we would have
	\[[u]_{A_1}u(x)\geq \frac{1}{|\tilde Q^{k_m}|}\int_{\tilde Q^{k_m}}u(y)\,dy>2^m\frac{1}{|\tilde Q^{k_0}|}\int_{\tilde Q^{k_0}}u(y)\,dy\]
	for every $m\in \N$, and by letting $m\to\infty$ we would get a contradiction. Thus $\{k_m\}=\{k_m\}_{m=0}^{m_0}$.
	
	By denoting $F_m=\{\ell\in G: k_m\leq \ell<k_{m+1}\}$ we can write
	
	\begin{align*}
	h(x)&=\s{(k,j)\in P}{}\frac{1}{|\tilde Q_i^k|}u(Q^k_j)\mathcal{X}_{\tilde Q_i^k}(x)\\
	&=\s{(k,j)\in P}{}\frac{u(Q^k_j)}{u(\tilde Q^k)}\left(\frac{1}{|\tilde Q^k|}\int_{\tilde Q^k}u(y)\,dy\right)\\
	&=\s{m=0}{m_0}\s{\ell\in F_m}{}\left(\frac{1}{|\tilde Q^\ell|}\int_{\tilde Q^\ell}u(y)\,dy\right)\s{j:(\ell,j)\in P_\ell}{}\frac{u(Q_j^\ell)}{u(\tilde Q^\ell)}\\
	&\leq 2\s{m=0}{m_0}\left(\frac{1}{|\tilde Q^{k_m}|}\int_{\tilde Q^{k_m}}u(y)\,dy\right)\s{\ell\in F_m}{}\s{j: (\ell,j)\in P_\ell}{}\frac{u(Q_j^\ell)}{u(\tilde Q^\ell)},
	\end{align*}
    where in the last inequality we have used \eqref{desigualdad2_sucesionkm}.
	
	\begin{afirmacion}\label{afirmacion3}
		There exists a positive constant $C$ such that
		\[\s{\ell\in F_m}{}\s{j:(\ell,j)\in P_\ell}{}\frac{u(Q_j^\ell)}{u(\tilde Q^\ell)}\leq C.\]
	\end{afirmacion}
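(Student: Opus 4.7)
The strategy is to exploit the self-similar structure of the principal-cube tree combined with the $A_\infty$ character of $u\in A_1$, which produces geometric decay of $u$-mass across generations, and to use the $A_1$ property of $u$ to compare $u(\tilde Q^\ell)$ for $\ell\in F_m$ with the reference value $u(\tilde Q^{k_m})$.

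The core structural fact is as follows. Fix $(t,s)\in P$. Since its principal children at each fixed level $\ell>t$ are disjoint subcubes of $Q_s^t$, inequality \eqref{desigualdad1_conjuntoGn}, summed over the level-$\ell$ children, rearranges to
\[
\Bigl|\bigcup_{\text{level-}\ell\text{ children of }(t,s)}Q_j^\ell\Bigr|\leq a^{-(\ell-t)\beta}|Q_s^t|.
\]
Because $u\in A_1\subset A_\infty$, with $A_\infty$ constants $C_u$ and $\eta_u$, this volume bound promotes to the $u$-mass bound $\sum_{\text{level-}\ell\text{ children}} u(Q_j^\ell)\leq C_u a^{-(\ell-t)\beta\eta_u}u(Q_s^t)$. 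Summing over $\ell>t$ and choosing $a>2^n$ large enough (relative to $\beta$ and $\eta_u$) so that $\theta:=\sum_{k\geq 1}C_u a^{-k\beta\eta_u}<1$, the total $u$-mass of the principal descendants of $(t,s)$ in the next generation is at most $\theta\,u(Q_s^t)$. Iterating across generations of the principal-cube tree yields the central inequality
\[
\sum_{(\ell,j)\in P,\ Q_j^\ell\subseteq R}u(Q_j^\ell)\leq\frac{u(R)}{1-\theta}\qquad\text{for every cube }R.
\]

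To handle the varying denominators $u(\tilde Q^\ell)$, I apply $u\in A_1$ to the nesting $\tilde Q^\ell\subseteq\tilde Q^{k_m}$:
\[
u(\tilde Q^\ell)\geq|\tilde Q^\ell|\inf_{\tilde Q^{k_m}}u\geq\frac{|\tilde Q^\ell|}{[u]_{A_1}|\tilde Q^{k_m}|}u(\tilde Q^{k_m}),
\]
which, paired with the upper bound $u(\tilde Q^\ell)/|\tilde Q^\ell|\leq 2u(\tilde Q^{k_m})/|\tilde Q^{k_m}|$ from \eqref{desigualdad2_sucesionkm}, shows that the $u$-densities of the cubes $\tilde Q^\ell$ with $\ell\in F_m$ are mutually comparable, with constants depending only on $[u]_{A_1}$.

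Assembling the ingredients: every principal cube appearing in the double sum is contained in $\tilde Q^{k_m}$, so the central inequality with $R=\tilde Q^{k_m}$ controls the total $u$-mass involved, and the density comparison converts $1/u(\tilde Q^\ell)$ into a bounded multiple of $|\tilde Q^{k_m}|/(|\tilde Q^\ell|u(\tilde Q^{k_m}))$. The remaining cube-size ratio $|\tilde Q^{k_m}|/|\tilde Q^\ell|$ is absorbed by a refined level-by-level application of the central inequality to the principal subtree rooted in each $\tilde Q^\ell$, exploiting the geometric weighting $a^{-(\ell-k_m)\beta}$ built into the principal-cube construction, so that the double sum collapses to a bounded geometric series. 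The main obstacle is precisely to bypass the naive estimate $\sum_{(\ell,j)\in P_\ell}u(Q_j^\ell)\leq u(\tilde Q^\ell)$ followed by summation over $\ell\in F_m$, which only produces the unbounded estimate $S_m\leq|F_m|$; the entire gain comes from the $A_\infty$-based geometric decay across principal generations, which is why the parameter $\beta\in(0,\eta)$ was introduced in the very definition of the principal cubes at the start of the proof.
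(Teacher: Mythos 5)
Your two structural ingredients (per\,-parent geometric decay of the $u$-mass of principal children, and the resulting bound $\sum_{(\ell,j)\in P,\,Q_j^\ell\subseteq R}u(Q_j^\ell)\lesssim u(R)$) are essentially correct, but they do not prove the claim, and the step where you assert they do is a genuine gap. The decay you obtain from \eqref{desigualdad1_conjuntoGn} is decay across \emph{generations} of the principal-cube tree, measured against each cube's principal ancestor; the sum in the claim runs over the \emph{levels} $\ell\in F_m$, with denominators $u(\tilde Q^\ell)$ attached to the stopping cubes $\tilde Q^\ell$, and these two parameters are unrelated. Your own reduction makes the problem visible: after the density comparison you must control $\sum_{\ell\in F_m}\frac{|\tilde Q^{k_m}|}{|\tilde Q^\ell|}\,\frac{1}{u(\tilde Q^{k_m})}\sum_{j:(\ell,j)\in P_\ell}u(Q_j^\ell)$, and the unbounded ratio $|\tilde Q^{k_m}|/|\tilde Q^\ell|$ is exactly what your central inequality (taken with $R=\tilde Q^{k_m}$) cannot see. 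Indeed, your ingredients are perfectly consistent with a configuration in which, for every $\ell$ with $k_m\le\ell<k_{m+1}$, the cube $\tilde Q^\ell$ contains one principal cube carrying a fixed proportion of $u(\tilde Q^\ell)$, with the masses $u(\tilde Q^\ell)$ decaying geometrically in $\ell$: then $\sum_{(\ell,j)}u(Q_j^\ell)\lesssim u(\tilde Q^{k_m})$ holds, yet the double sum in the claim is of order $\#F_m$, which is unbounded. The sentence in which the size ratio is ``absorbed by a refined level-by-level application of the central inequality, exploiting the geometric weighting $a^{-(\ell-k_m)\beta}$ built into the principal-cube construction'' is precisely the missing proof: no such weighting relative to $k_m$ is built into the construction, because \eqref{desigualdad1_conjuntoGn}--\eqref{desigualdad2_conjuntoGn} compare a principal cube only with its principal ancestors, never with $\tilde Q^\ell$ or with anything indexed by the sequence $\{k_m\}$.

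What is actually needed, and what the paper proves, is the estimate \eqref{ec1_demo_afirmacion3}: every $Q_j^\ell$ occurring in the sum satisfies $\frac{1}{|Q_j^\ell|}\int_{Q_j^\ell}u>\frac{a^{(\ell-k_m)\beta}}{2[u]_{A_1}}\frac{1}{|\tilde Q^\ell|}\int_{\tilde Q^\ell}u$. Getting this is the real work: one takes the $\Omega_{k_m}$-maximal cube $Q_s^{k_m}\supseteq Q_j^\ell$, shows $Q_s^{k_m}\subsetneq\tilde Q^{k_m}$, deduces from dyadic maximality in $\{M_{\mathcal D}v>a^{k_m}\}$ that the $v$-average of $Q_s^{k_m}$ is at most $2^na^{k_m}$, hence $|Q_s^{k_m}\cap\{v\le a^{k_m+1}\}|>0$, i.e.\ $(k_m,s)\in\Gamma_N$; then chains \eqref{desigualdad1_conjuntoGn} and \eqref{desigualdad2_conjuntoGn} through the smallest principal cube containing $Q_s^{k_m}$, and finally compares with $\tilde Q^\ell$ via \eqref{desigualdad2_sucesionkm} and $u\in A_1$. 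Once \eqref{ec1_demo_afirmacion3} is available, the $A_1$ condition gives a pointwise bound $u>\lambda$ on $\bigcup_{j:(\ell,j)\in P_\ell}Q_j^\ell$ with $\lambda$ comparable to $a^{(\ell-k_m)\beta}u(\tilde Q^\ell)/|\tilde Q^\ell|$, and Chebyshev together with the $A_\infty$ property of $u$ yields $\sum_j u(Q_j^\ell)\lesssim a^{(k_m-\ell)\beta\nu}u(\tilde Q^\ell)$, which sums in $\ell\ge k_m$; this decay in $\ell-k_m$ is what your generation-indexed decay cannot replace. Two lesser points: choosing $a$ large depending on $[u]_{A_\infty}$, $\eta_u$, $\beta$ modifies the global setup in which $a>2^n$ was fixed (harmless, but it must be arranged in the main proof, not inside the claim); and your central inequality sums over pairs $(\ell,j)\in P$, so one and the same cube may be counted once for each level at which it is an $\Omega_\ell$-maximal cube belonging to $\Gamma_N$ -- the multiplicity is bounded thanks to \eqref{propiedad_cubos_para_v}, but that too requires an argument you do not give.
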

	
	If this claim holds, we are done. Indeed, denoting with $C_m=|\tilde Q^{k_m}|^{-1}\int_{\tilde Q^{k_m}}u$, using the estimation above, and \eqref{desigualdad1_sucesionkm} we have that
	\begin{align*}
	h(x)&\leq C\s{m=0}{m_0}C_m\leq C\s{m=0}{m_0} C_{m_0}2^{m-m_0}\\
	&= CC_{m_0}2^{-m_0}\s{m=0}{m_0}2^m=CC_{m_0}2^{-m_0}(2^{m_0+1}-1)\\
	&\leq CC_{m_0}\leq C[u]_{A_1}u(x).\qedhere
	\end{align*}
\end{proof}

In order to conclude, we will prove the claims.

\begin{proof}[Proof of Claim~\ref{afirmacion1}]
	Fix $(t,s)\in P$ and define
	\[
	I(t,s)=\left\{(k,j)\in \Gamma_N: Q^k_j\subseteq Q_s^t \textrm{ and $Q_s^t$ is the smallest principal cube that contains } Q^k_j\right\}.\]
	Particularly, every $Q^k_j$ with $(k,j)\in I(t,s)$ is not principal, unless $(k,j)=(t,s)$. By condition~\eqref{desigualdad2_conjuntoGn} we can write
	\begin{align*}
	\s{(k,j)\in I(t,s)}{}\frac{v^r(Q^k_j)}{|Q^k_j|}u(Q^k_j)&\leq \s{(k,j)\in I(t,s)}{}a^{(k-t)\beta }\frac{u(Q_s^t)}{|Q_s^t|}v^r(Q^k_j)\\
	&\leq C\frac{u(Q_s^t)}{|Q_s^t|}\s{(k,j)\in I(t,s)}{}a^{(k-t)\beta }v^r(Q^k_j)\\
	&\leq C\frac{u(Q_s^t)}{|Q_s^t|}\s{(k,j)\in I(t,s)}{}a^{(k-t)\beta }v^r(Q_s^t\cap\{x: M_{\mathcal{D}}v(x)>a^k\}).
	\end{align*}
	On the other hand, by the $A_\infty$-condition of $v^r$ and \eqref{propiedad_cubos_para_v} we obtain that
	\begin{align*} 
	v^r(Q_s^t\cap\{x: M_{\mathcal{D}}v(x)>a^k\})&=\frac{v^r(Q_s^t\cap\{x: M_{\mathcal{D}}v(x)>a^k\})}{v^r(Q_s^t)}v^r(Q_s^t)\\
	&\leq [v^r]_{A_\infty}\left(\frac{|Q_s^t\cap\{x: M_{\mathcal{D}}v(x)>a^k\}|}{|Q_s^t|}\right)^{\eta}v^r(Q_s^t)\\
	&\leq [v^r]_{A_\infty}\left(\frac{[v]_{A_1}}{|Q_s^t|a^k}\int_{Q_s^t}v(x)\,dx\right)^{\eta}v^r(Q_s^t)\\
	&\leq [v^r]_{A_\infty}(a[v]_{A_1}^2)^{\eta}a^{(t-k)\eta}v^r(Q_s^t).
	\end{align*}
	
	Combining these two estimates, we have that
	
	\begin{align*}
	\s{(k,j)\in I(t,s)}{}\frac{v^r(Q^k_j)}{|Q^k_j|}u(Q^k_j)&\leq C\frac{u(Q_s^t)}{|Q_s^t|}\s{(k,j)\in I(t,s)}{}a^{(k-t)\beta }a^{(t-k)\eta}v^r(Q_s^t)\\
	&= C\frac{u(Q_s^t)}{|Q_s^t|}v^r(Q_s^t)\s{k\geq t}{}a^{(t-k)(\eta-\beta)}\\
	&\leq C\frac{u(Q_s^t)}{|Q_s^t|}v^r(Q_s^t),
	\end{align*}
	since $\eta-\beta>0$ and $a>2^n>1$. So, we have obtained that
	\[\s{(k,j)\in I(t,s)}{}\frac{v^r(Q^k_j)}{|Q^k_j|}u(Q^k_j)\leq C\frac{u(Q_s^t)}{|Q_s^t|}v^r(Q_s^t),\]
	and if we sum over all $(t,s)\in P$ it follows that
	\[\s{(k,j)\in \Gamma_N}{}\frac{v^r(Q^k_j)}{|Q^k_j|}u(Q^k_j)\leq\s{(t,s)\in P}{}\s{(k,j)\in I(t,s)}{}\frac{v^r(Q^k_j)}{|Q^k_j|}u(Q^k_j)\leq C\s{(t,s)\in P}{}\frac{v^r(Q_s^t)}{|Q_s^t|}u(Q_s^t),\]
	since $\bigcup_{(t,s)\in P}I(t,s)=\Gamma_N$.
\end{proof}

\bigskip

\begin{proof}[Proof of Claim~\ref{afirmacion2}]

	Fix one of these cubes $\tilde Q_i^k$. We define the sets $A=\{x\in \tilde Q_i^k: v(x)\leq t_0a^k\}$, where $t_0$ is given in \eqref{propiedad acotacion de Phi} and $B=\tilde Q_i^k\backslash A$. Thus,
	\[1<\norm{\frac{g}{a^k}}_{\Phi,\tilde Q_i^k}\leq \norm{\frac{g}{a^k}\mathcal{X}_A}_{\Phi,\tilde Q_i^k}+\norm{\frac{g}{a^k}\mathcal{X}_B}_{\Phi,\tilde Q_i^k}=I+II.\]
	
	Then we can deduce that either $I>1/2$ or $II>1/2$. If the first case holds, from the submultiplicativity and the lower type of $\Phi$ we have
	\begin{align*}
	1&<\frac{1}{|\tilde Q_i^k|}\int_A \Phi\left(\frac{2fv}{ta^k}\right)\,dx\\
	&\leq \Phi(2t_0)\frac{1}{|\tilde Q_i^k|}\int_A \Phi\left(\frac{f}{t}\right)\Phi\left(\frac{v}{t_0a^k}\right)\,dx\\
	&\leq C_r\Phi(2t_0)\frac{1}{|\tilde Q_i^k|}\int_{\tilde Q_i^k} \Phi\left(\frac{f}{t}\right)\frac{v^r}{t_0^ra^{kr}}\,dx\\
	&= C_r\frac{\Phi(2t_0)}{t_0^ra^{kr}}\frac{1}{|\tilde Q_i^k|}\int_{\tilde Q_i^k} \Phi\left(\frac{f}{t}\right)v^r\,dx,
	\end{align*}
	which means that
	\begin{equation*}
	a^{kr}<C_r\frac{\Phi(2t_0)}{t_0^r}\frac{1}{|\tilde Q_i^k|}\int_{\tilde Q_i^k} \Phi\left(\frac{f}{t}\right)v^r\,dx.
	\end{equation*}
	
	For the second case, if we set $J_{i,k}=\{\ell: Q_\ell^k\subset \tilde Q_i^k\}$, then $B\subset \bigcup_{\ell\in J_{i,k}}Q_\ell^k$. Indeed, $B=\tilde Q_i^k\cap\{v>t_0a^k\}\subset \tilde Q_i^k\cap\{M_\mathcal{D} v>a^k\}$, and this implies that $B\subset \tilde Q_i^k\cap\{M_{\mathcal{D}}v>a^k\}\cap\{x: M_{\Phi,\mathcal{D}}g(x)>a^k\}=\tilde Q_i^k\cap \Omega_k$. Hence, 
	\begin{equation}\label{descomposicion de B en cubos}
	B\subset \tilde Q_i^k\cap \left(\bigcup_{\ell\in J_{i,k}} Q_\ell^k\right)=\bigcup_{\ell\in J_{i,k}} \left(\tilde Q_i^k\cap Q_\ell^k\right)=\bigcup_{\ell\in J_{i,k}} Q_\ell^k.
	\end{equation}
	Then, since $\Phi\in \mathcal{F}_r$ we have that
	\begin{align*}
	1&<\frac{1}{|\tilde Q_i^k|}\int_B \Phi\left(\frac{2fv}{ta^k}\right)\,dx\\
	&\leq \Phi(2)\frac{1}{|\tilde Q_i^k|}\int_B \Phi\left(\frac{f}{t}\right)\Phi\left(\frac{v}{a^k}\right)\left(\frac{v}{a^k}\right)^{-r}\frac{v^r}{a^{kr}}\mathcal{X}_{\tilde Q_i^k\cap B}\,dx\\
	&\leq \Phi(2)C_0\frac{1}{|\tilde Q_i^k|}\int_B \Phi\left(\frac{f}{t}\right)\frac{v^r}{a^{kr}}\left(\log\left(\frac{v}{a^k}\right)\right)^\delta\mathcal{X}_{\tilde Q_i^k\cap B}\,dx,
	\end{align*}
	and using \eqref{descomposicion de B en cubos} we can write
	\begin{align*}
	a^{kr}&< C\frac{1}{|\tilde Q_i^k|}\int_B \Phi\left(\frac{f}{t}\right)v^r\left(\log\left(\frac{v}{a^k}\right)\right)^\delta\mathcal{X}_{\tilde Q_i^k\cap B}\,dx\\
	&\leq \frac{C}{|\tilde Q_i^k|}\s{\ell\in J_{i,k}}{}\int_{Q_\ell^k} \Phi\left(\frac{f}{t}\right)w_kv^r\,dx,
	\end{align*}
	where $w_k=\left(\log\left(\frac{v}{a^k}\right)\right)^\delta\mathcal{X}_{Q_\ell^k\cap B}$.
	\medskip
	
	Note that, since $v^r\in A_1$, there exists an exponent $s>1$ such that $v^r\in \mathrm{RH}_s$. Let 
	\[\delta_0=\max\{\delta,1\}\quad\textrm{ and }\quad \e_0=\min\{1/(s'[v]_{A_1}^{1/s'}a^{1/s'}[v^r]_{RH_s}\delta_0-1),1\}.\]
	Fix $0<\e\leq \e_0$ and $\gamma=1+\e$, so $\gamma'=1+1/\e$. Thus, by applying H\"{o}lder's inequality with exponents $\gamma$ and $\gamma'$,  with respect to the measure $d\mu=v^r(x)\,dx$, we obtain
	\begin{equation}\label{estimacion parte B 1 caso r general}
	a^{kr}<\frac{C_0\Phi(2)}{|\tilde Q_i^k|}\s{\ell\in J_{i,k}}{}v^r(Q_\ell^k)\left(\frac{1}{v^r(Q_\ell^k)}\int_{Q_\ell^k}\Phi^\gamma\left(\frac{f}{t}\right)v^r\,dx\right)^{1/\gamma}\left(\frac{1}{v^r(Q_\ell^k)}\int_{Q_\ell^k}w_k^{\gamma'}v^r\,dx\right)^{1/\gamma'}.
	\end{equation}
	
	Next, we prove that the second average is bounded by a positive constant $K$, independent of $\e$. Indeed, by using \eqref{propiedad_cubos_para_v}, the fact that $\log(t)\lesssim \xi^{-1}t^{\xi}$, for every $t,\xi>0$ and H\"{o}lder's inequality with exponents $s$ and $s'$ we have that 
	
	\begin{align*}
	\left(\frac{1}{v^r(Q_\ell^k)}\int_{Q_\ell^k}  w_k^{\gamma'}v^r(x)\,dx\right)^{1/\gamma'}&= \left(\frac{1}{v^r(Q_\ell^k)}\int_{Q_\ell^k\cap B} \log\left(\frac{v(x)}{a^k}\right)^{\delta\gamma'}v^r(x)\,dx\right)^{1/\gamma'}\\
	&\leq \left(\frac{1}{v^r(Q_\ell^k)}\int_{Q_\ell^k\cap B} \delta s'\gamma'\left(\frac{v(x)}{a^k}\right)^{1/s'}v^r(x)\,dx\right)^{1/\gamma'}\\
	&\leq  \left(\delta s'\gamma'\frac{|Q_\ell^k|}{v^r(Q_\ell^k)}\left(\frac{1}{|Q_\ell^k|}\int_{Q_\ell^k}\frac{v(x)}{a^k}\right)^{1/s'}\left(\frac{1}{|Q_\ell^k|}\int_{Q_\ell^k}v^{rs}(x)\,dx\right)^{1/s}\right)^{1/\gamma'}\\
	&\leq \left([v]_{A_1}^{1/s'}a^{1/s'}[v^r]_{\mathrm{RH}_s} \delta_0\gamma's'\right)^{1/\gamma'}\\
	&\leq (\gamma'\gamma')^{1/\gamma'}\\
	&\leq e^{2/e},
	\end{align*}
	where we have used the definition of $\e$ and Lemma~\ref{estimacion constante epsilon}. Thus, we can choose $K=e^{2/e}$. Then, we have proved that
	\begin{equation}\label{estimacion akr 1}
	a^{kr}< \frac{C}{|\tilde Q_i^k|}\s{\ell \in J_{i,k}}{}v^r(Q_\ell^k)\left(\frac{1}{v^r(Q_\ell^k)}\int_{Q_\ell^k}\Phi^\gamma\left(\frac{f}{t}\right)v^r\,dx\right)^{1/\gamma},
	\end{equation}
	and observe that if we set $\Psi(t)=t^{\gamma}$, the expression between brackets is $\norm{\Phi(f/t)}_{\Psi,v^r,Q_\ell^k}$. By using \eqref{equivalencia normal Luxemburo con infimo} we have that
	\begin{equation}\label{acotacion norma luxemburgo de Phi}
	\norm{\Phi(f/t)}_{\Psi,v^r,Q_\ell^k}\leq \tau+\frac{\tau^{1-\gamma}}{v^r(Q_\ell^k)}\int_{Q_\ell^k}\Phi^\gamma\left(\frac{f}{t}\right)v^r\,dx,
	\end{equation}
	for every $\tau>0$.
	By combining estimates \eqref{estimacion akr 1} and \eqref{acotacion norma luxemburgo de Phi} we have
	\begin{equation}\label{estimacion akr 2}
	a^{kr}<C\tau\frac{v^r(\tilde Q_i^k)}{|\tilde Q_i^k|}+\frac{C\tau^{1-\gamma}}{|\tilde Q_i^k|}\int_{\tilde Q_i^k}\Phi^\gamma\left(\frac{f}{t}\right)v^r\,dx.
	\end{equation}
	Observe that
	    
	    \begin{equation}\label{estimacion promedio cubos grandes}
	    \frac{1}{|\tilde Q_i^k|}\int_{\tilde Q_i^k}v^r(x)\,dx\leq (1+a^r[v]_{A_1}^r[v^r]_{A_1})a^{kr}.
		\end{equation}
		Indeed, notice that
		\[\{x\in \tilde Q_i^k: v(x)>a^k\}\subset \bigcup_{\ell\in J_{i,k}} Q_\ell^k,\]
		and then 
		\begin{align*}
		\frac{1}{|\tilde Q_i^k|}\int_{\tilde Q_i^k} v^r\,dx&=\frac{1}{|\tilde Q_i^k|}\left[\int_{\tilde Q_i^k\cap\{v\leq a^k\}}v^r\,dx+\int_{\tilde Q_i^k\cap\{v> a^k\}}v^r\,dx\right]\\
		&\leq a^{kr}+\s{j\in J_{i,k}}{}{\frac{|Q_\ell^k|}{|\tilde Q_i^k|}\frac{1}{|Q_\ell^k|}\int_{Q_\ell^k}v^r\,dx}\\
		&\leq a^{kr}+[v]_{A_1}^r[v^r]_{A_1}a^ra^{kr}\\
		&\leq(1+[v]_{A_1}^r[v^r]_{A_1}a^r)a^{kr},
		\end{align*}
		by virtue of \eqref{propiedad_cubos_para_v^r}.
		
		Thus, we select $\tau$ such that $C\tau(1+[v]_{A_1}^r[v^r]_{A_1}a^r)=1/2$. From  \eqref{estimacion akr 2},
		\begin{align*}
		a^{kr}&<2C\left(\frac{1}{2C(1+[v]_{A_1}^r[v^r]_{A_1}a^r)}\right)^{1-\gamma}\frac{1}{|\tilde Q_i^k|}\int_{\tilde Q_i^k}\Phi^\gamma\left(\frac{f}{t}\right)v^r\,dx\\
		&\leq C\frac{1}{|\tilde Q_i^k|}\int_{\tilde Q_i^k}\Phi^\gamma\left(\frac{f}{t}\right)v^r\,dx,
		\end{align*}
		for every $0<\e\leq \e_0$. Then by letting $\e\to 0$ and using the Dominate Convergence Theorem, we get
		\[a^{kr}<(4C)^2(1+[v]_{A_1}^r[v^r]_{A_1}a^r)\frac{1}{|\tilde Q_i^k|}\int_{\tilde Q_i^k}\Phi\left(\frac{f}{t}\right)v^r\,dx,\]
		which completes the proof of the claim. 
	\end{proof}	

\bigskip

\begin{proof}[Proof of Claim~\ref{afirmacion3}]
	Let us first assume that, if $(\ell,j)\in P_\ell$ and $k_m\leq \ell<k_{m+1}$, then
	\begin{equation}\label{ec1_demo_afirmacion3}
	\frac{1}{|Q_j^\ell|}\int_{Q_j^\ell}u(y)\,dy>\frac{a^{(\ell-k_m)\beta}}{2[u]_{A_1}}\frac{1}{|\tilde Q^\ell|}\int_{\tilde Q^\ell}u(y)\,dy.
	\end{equation}
	Thus, if $y\in Q_j^\ell$,
	\begin{align*}
	u(y)[u]_{A_1}&\geq\frac{1}{|Q_j^\ell|}\int_{Q_j^\ell}u(z)\,dz\\
	&>\frac{a^{(\ell-k_m)\beta}}{2[u]_{A_1}}\frac{1}{|\tilde Q^\ell|}\int_{\tilde Q^\ell}u(y)\,dy
	\end{align*}
	and consequently
	\[u(y)>\frac{a^{(\ell-k_m)\beta}}{2[u]_{A_1}^2}\frac{u(\tilde Q^\ell)}{|\tilde Q^\ell|}=:\lambda,\]
	which implies that
	\[\bigcup_{j:(\ell,j)\in P_\ell}Q_j^\ell\subseteq\{x\in \tilde Q^\ell: u(x)>\lambda\}.\]
	Since $u\in A_1\subseteq A_{\infty}$, there exist positive constants $[u]_{A_\infty}$ and $\nu$ for which $\frac{u(E)}{u(Q)}\leq [u]_{A_\infty}\left(\frac{|E|}{|Q|}\right)^{\nu}$ holds, for every measurable set $E\subseteq Q$. Then, from Chebyshev's inequality and the definition of $\lambda$ we have that
	\begin{align*}
	\s{j:(\ell,j)\in P_\ell}{}u(Q_j^\ell)&=u\left(\bigcup_{j:(\ell,j)\in P_\ell} Q_j^\ell\right)\\
	&\leq u(\{x\in \tilde Q^\ell: u(x)>\lambda\})\\
	&\leq Cu(\tilde Q^\ell)\left(\frac{|\{x\in \tilde Q^\ell: u(x)>\lambda\}|}{|\tilde Q^\ell|}\right)^{\nu}\\
	&\leq Cu(\tilde Q^\ell)\left(\frac{1}{\lambda|\tilde Q^\ell|}\int_{\tilde Q^\ell}u(y)\,dy\right)^{\nu}\\
	&=Cu(\tilde Q^\ell)a^{(k_m-\ell)\beta\nu},
	\end{align*}
	and finally
	\begin{align*}
	\s{\ell\in F_m}{}\s{j:(\ell,j)\in P_\ell}{}\frac{u(Q_j^\ell)}{u(\tilde Q^\ell)}&\leq C\s{\ell\in F_m}{}a^{(k_m-\ell)\beta\nu}\\
	&\leq C\s{\ell\geq k_m}{}a^{(k_m-\ell)\beta\nu}=C,
	\end{align*}
	since $a>1$. This completes the proof.
	
	\bigskip
	
	In order to finish we will prove that ~\eqref{ec1_demo_afirmacion3} holds. Select $(\ell,j)\in P_\ell$ with $k_m\leq \ell<k_{m+1}$. Since $\Omega_\ell\subseteq \Omega_{k_m}$, by maximality, there exists a unique $s$ verifying $Q_j^\ell\subseteq Q_s^{k_m}$. We shall see that $(k_m,s)\in \Gamma_N$. If $(k_m,s)\in P$ is trivial because $P\subseteq \Gamma_N$. Then, assume that $(k_m,s)\not\in P$. From the definition of $G$ and $P_{k_m}$, $\tilde Q^{k_m}$ contains a cube $Q_p^{k_m}$ with $(k_m,p)\in P$. We shall see, as a first step, that $Q_s^{k_m}\subsetneq \tilde Q^{k_m}$. Indeed, there exists a unique $i(s)$ for which $Q_j^\ell\subseteq Q_s^{k_m}\subseteq \tilde Q_{i(s)}^{k_m}$. Besides,
	\[\left\{x: M_{\Phi,\mathcal{D}}g(x)>a^\ell\right\}\subseteq \left\{x: M_{\Phi,\mathcal{D}}g(x)>a^{k_m}\right\}=\bigcup_i \tilde Q_i^{k_m},\]
	and this implies that there exists a unique $i_0$ such that $Q_j^\ell \subseteq \tilde Q^\ell\subseteq \tilde Q_{i_0}^{k_m}$. On the other hand, from the definition of $\tilde Q^k$, $x\in \tilde Q^{k_m}$ and $x\in \tilde Q_{i_0}^{k_m}$, and we must have
	\[\tilde Q_{i(s)}^{k_m}=\tilde Q_{i_0}^{k_m}=\tilde Q^{k_m},\]
	which directly implies that $Q_s^{k_m}\subseteq \tilde Q^{k_m}$. In fact, this inclusion is proper, because both $Q_s^{k_m}$ and $Q_p^{k_m}$ are contained in $\tilde Q^{k_m}$, and also $s\neq p$.
	
	Observe that $\tilde Q^{k_m}$ is a maximal cube of the set $\{x: M_{\Phi,\mathcal{D}}g(x)>a^{k_m}\}$ and $Q_s^{k_m}$ is a maximal cube of
	\[\Omega_{k_m}=\{x\in\R^n: M_{\mathcal{D}}v(x)>a^{k_m}\}\cap\{x\in\R^n: M_{\Phi,\mathcal{D}}g(x)>a^{k_m}\}.\]
	Since $Q_s^{k_m}\subsetneq \tilde Q^{k_m}$ we have that $Q_s^{k_m}$ is a dyadic maximal cube of the set $\{x: M_{\mathcal{D}}v(x)>a^{k_m}\}$. This means that
	\begin{equation}\label{ec2_demo_afirmacion3}
	\frac{1}{|Q_s^{k_m}|}\int_{Q_s^{k_m}}v(y)\,dy\leq 2^na^{k_m}\leq a^{k_m+1},
	\end{equation}
	since $a>2^n$, which leads us to $|Q_s^{k_m}\cap\{x:v(x)\leq a^{k_m+1}\}|>0$. Indeed, if it is not the case, denoting $E=Q_s^{k_m}\cap\{x: v(x)>a^{k_m+1}\}$ we would have that
	\[\frac{1}{|Q_s^{k_m}|}\int_{Q_s^{k_m}}v(y)\,dy=\frac{1}{|E|}\int_{Q_s^{k_m}}v(y)\,dy>\frac{1}{|E|}\int_E v(y)\,dy>a^{k_m+1},\]
	which contradicts~\eqref{ec2_demo_afirmacion3}. Therefore, $(k_m,s)\in \Gamma_N$ and $Q_s^{k_m}$ is contained in, at least, one principal cube. Let $Q_{\sigma}^k$ the smallest principal cube that contains 
	$Q_s^{k_m}$. By using conditions~\eqref{desigualdad1_conjuntoGn} and~\eqref{desigualdad2_conjuntoGn} we can write
	\[\frac{1}{|Q_j^\ell|}\int_{Q_j^\ell}u(y)\,dy>a^{(\ell-k)\beta }\frac{1}{|Q_\sigma^k|}\int_{Q_\sigma^k}u(y)\,dy\geq a^{(\ell-k_m)\beta }\frac{1}{|Q_s^{k_m}|}\int_{Q_s^{k_m}}u(y)\,dy.\]
	
	Also, from \eqref{desigualdad2_sucesionkm}
	\[\frac{1}{|\tilde Q^\ell|}\int_{\tilde Q^\ell}u(y)\,dy\leq \frac{2}{|\tilde Q^{k_m}|}\int_{\tilde Q^{k_m}}u(y)\,dy\leq 2[u]_{A_1}\inf_{\tilde Q^{k_m}}u\leq2[u]_{A_1}\frac{1}{|Q_s^{k_m}|}\int_{Q_s^{k_m}}u(y)\,dy.\]
	Combining these two estimates we get
	\begin{align*}
	\frac{1}{|Q_j^\ell|}\int_{Q_j^\ell}u(y)\,dy&>a^{(\ell-k)\beta }a^{(k-k_m)\beta }\frac{1}{|Q_s^{k_m}|}\int_{Q_s^{k_m}}u(y)\,dy\\
	&=a^{(\ell-k_m)\beta }\frac{1}{|Q_s^{k_m}|}\int_{Q_s^{k_m}}u(y)\,dy\\
	&>\frac{1}{2[u]_{A_1}}a^{(\ell-k_m)\beta }\frac{1}{|\tilde Q^\ell|}\int_{\tilde Q^\ell}u(y)\,dy,
	\end{align*}
	which completes the proof of the claim.
\end{proof}

\section{Interpolation and applications}\label{seccion: interpolacion y aplicaciones}

In this section we use some interpolation techniques that involve modular type inequalities in order to achieve the boundedness of the operator $M_\Phi$ in $L^p(w)$. It is known that $M_\Phi$ is bounded in $L^p(w)$ for every $r<p<\infty$ and every $w\in A_{p/r}$, when some properties of $\Phi$ are assumed. The result below was proved in \cite{B-D-P} in the more general context of Lebesgue spaces with variable exponent. 

\begin{teo}[\cite{B-D-P}, Thm. 2.5]
	Let $w$ be a weight, $1<p<\infty$ and $1\leq r<p$. Let $\varphi$ be a Young function that satisfies $\varphi\in B_\xi$, for every $\xi>r$. If $w\in A_{p/r}$, then $M_{\varphi}$ is bounded in $L^p(w)$.
\end{teo}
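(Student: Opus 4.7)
The plan is to combine Theorem~\ref{teorema principal} with Jones factorization and a Marcinkiewicz-type interpolation, in direct analogy with Sawyer's original deduction of the $L^p(A_p)$ boundedness of $M$ from his mixed weak-type inequality.

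First I would reduce to the setting of Theorem~\ref{teorema principal}. Given $w\in A_{p/r}$, Jones factorization yields $w=u_0 u_1^{1-p/r}$ with $u_0,u_1\in A_1$. Setting $u:=u_0$ and $v:=u_1^{1/r}$, one has $u,v^r\in A_1$ and $w=uv^{r-p}$. The substitution $F=f/v$ then converts the target inequality $\|M_\Phi f\|_{L^p(w)}\leq C\|f\|_{L^p(w)}$ into $\int (TF)^p\,d\mu\leq C\int F^p\,d\mu$, where $d\mu=uv^r\,dx$ and $TF:=M_\Phi(Fv)/v$. Theorem~\ref{teorema principal} then supplies the modular weak-type estimate $\mu(\{TF>t\})\leq C\int\Phi(F/t)\,d\mu$ for every $t>0$.

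To promote this weak-type bound to an $L^p(\mu)$-bound I would first establish the $L^\infty$-endpoint $\|TF\|_{L^\infty(\mu)}\lesssim\|F\|_{L^\infty(\mu)}$. Since $M_\Phi$ is $1$-homogeneous, this reduces to the pointwise inequality $M_\Phi v\lesssim v$ almost everywhere. Because $v^r\in A_1$, reverse H\"older gives $v^{rs}\in A_1$ for some $s>1$; combining this with the growth estimate~\eqref{propiedad acotacion de Phi} (used through $(\log t)^\delta\lesssim_\e t^{\e}$ with $\e$ small so that $r+\e\leq rs$), one can test the Luxemburg average at $\lambda\asymp\bigl(|Q|^{-1}\int_Q v^{rs}\bigr)^{1/(rs)}$ and conclude $\|v\|_{\Phi,Q}\lesssim\inf_Q v$. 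This is exactly the natural $A_1$-type bound for $M_\Phi$ taking the place of the classical $Mv\lesssim v$ used by Sawyer in the case $\Phi(t)=t$.

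With both endpoints in hand, the proof closes via a standard Marcinkiewicz splitting. For each level $t>0$, write $F=F_1+F_2$ with $F_1=F\chi_{\{|F|>\alpha t\}}$ and $\alpha$ chosen so that $\|TF_2\|_\infty\leq t/2$; then $\{TF>t\}\subseteq\{TF_1>t/2\}$ modulo $\mu$-null sets. Applying the modular weak-type bound to $F_1$, integrating by layer cake, using Fubini, and changing variables via $s=2F/t$, the $L^p$ norm is bounded by $C\int F^p\,d\mu$ multiplied by the tail integral $\int_{2\alpha}^{\infty}\Phi(s)/s^{p+1}\,ds$. This last integral is finite because $\Phi\in B_p$, which holds since $p>r$ and $\Phi\in B_\xi$ for every $\xi>r$. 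Undoing the substitution $F=f/v$ yields the claimed $L^p(w)$-boundedness. The hard part, I anticipate, is the pointwise lemma $M_\Phi v\lesssim v$: although it is the natural analogue of $Mv\lesssim v$ in this generalized setting, its proof hinges on a delicate interplay between the reverse-H\"older exponent of $v^r$ and the growth condition~\eqref{propiedad acotacion de Phi} for $\Phi\in\mathcal{F}_r$, and the Luxemburg testing must be arranged so that all constants cooperate. Once this pointwise estimate is secured, the remaining interpolation is the classical Marcinkiewicz scheme, directly paralleling Sawyer's deduction for $M$.
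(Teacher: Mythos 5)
Your proposal is correct and follows essentially the same route as the paper: Jones factorization $w=uv^{r-p}$ with $u,v^r\in A_1$, reduction to the auxiliary operator $S_\Phi f=M_\Phi(fv)/v$ on $L^p(uv^r)$, the mixed weak-type estimate of Theorem~\ref{teorema principal}, and the $L^\infty(uv^r)$ endpoint proved exactly as you indicate (reverse H\"older for $v^r$ together with the growth condition \eqref{propiedad acotacion de Phi}), so that the scope of the argument is, as in the paper, the class $\mathcal{F}_r$. The only difference is cosmetic: you close with the classical Marcinkiewicz layer-cake computation and the $B_p$ tail integral, while the paper packages the same truncation-plus-Fubini calculation in the modular interpolation lemmas of \cite{A-F}, verifying instead that $\Phi\prec_N pt^{p-1}$; the two convergence conditions play the same role here.
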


The main goal of this section is to give an alternative proof of the boundedness properties of $M_{\Phi}$, when $\Phi$ is a function that satisfies the hypotheses in Theorem~\ref{teorema principal}. It is not difficult to see that these hypotheses imply $\Phi\in B_\xi$ for every $\xi>r$, so $M_{\Phi}$ is bounded in $L^p(w)$ for every $r<p<\infty$ and $w\in A_{p/r}$. In order to achieve this estimate, we shall use the notation and an adaptation of the results of \cite{A-F}.

\begin{defi}\label{definicion casi creciente}
	We say that a function $\varphi:\R^+\to\R^+$ is \emph{quasi-increasing (q.i.)} if there exists a constant $\rho>0$ for which 
		\[\frac{1}{x}\int_0^x \varphi(t)\,dt\leq \rho\varphi(x), \]
	for every $x>0$. We will say that $\rho$ is a quasi-increasing constant of $\varphi$.
\end{defi}

\begin{defi}
	Let $\varphi, \psi:\R^+\to\R^+$ be functions. We will say that $\varphi\prec_N\psi$ if the collection $\{\psi(x)\varphi(\alpha/x)\}_{\alpha>0}$ is a family of q.i. functions with a constant independent of $\alpha$. 
\end{defi}

In view of Definition~\ref{definicion casi creciente}, if $\varphi\prec_N\psi$ then there exists a constant $\rho$ such that the inequality
	\[\frac{1}{x}\int_0^x \psi(t)\varphi(\alpha/t)\,dt\leq \rho\psi(x)\varphi(\alpha/x) \]
	holds for every $x>0$ and $\alpha>0$.

The next two lemmas will be useful to the main purpose of this section. Although both can be found in \cite{A-F}, we include the proofs for the sake of completeness.

\begin{lema}\label{implicacion de tipo debil para interpolacion modular}
	Let $\mu$ be a measure, $T$ a sub-additive operator, and $\varphi$ a Young function. Assume that
	\[\mu(\{x: |Tf(x)|>\lambda\})\leq C\int_{\R^n}\varphi\left(\frac{c|f(x)|}{\lambda}\right)\,d\mu,\]
	for some positive constants $C$ and $c$, and every $\lambda>0$. Also assume that $\norm{Tf}_{L^\infty(\mu)}\leq C_0\norm{f}_{L^\infty(\mu)}$. Then
	\[\mu\left(\left\{x: |Tf(x)|>\lambda\right\}\right)\leq C\int_{\{x: |f(x)|>\lambda/(2C_0)\}}\varphi\left(\frac{2cf(x)}{\lambda}\right)\,d\mu.\]
\end{lema}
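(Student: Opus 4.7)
The plan is the standard Calderón--Zygmund-style splitting of $f$ into a "small" part and a "big" part, calibrated so that the $L^\infty$ bound absorbs the small part and the weak modular estimate handles the big part.

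First I would decompose $f = f_1 + f_2$ by setting $f_1 = f\,\mathcal{X}_{\{|f|>\lambda/(2C_0)\}}$ and $f_2 = f\,\mathcal{X}_{\{|f|\leq\lambda/(2C_0)\}}$. By the assumed $L^\infty$ bound we have $\norm{Tf_2}_{L^\infty(\mu)} \leq C_0\norm{f_2}_{L^\infty(\mu)} \leq \lambda/2$, so $|Tf_2(x)|\leq \lambda/2$ for $\mu$-a.e.\ $x$. Sub-additivity of $T$ then yields
\[\{x:|Tf(x)|>\lambda\}\subseteq \{x:|Tf_1(x)|>\lambda/2\}\]
up to a $\mu$-null set.

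Next I would apply the hypothesized weak modular inequality to $f_1$ at level $\lambda/2$:
\[\mu(\{x:|Tf_1(x)|>\lambda/2\})\leq C\int_{\R^n}\varphi\!\left(\frac{2c|f_1(x)|}{\lambda}\right)d\mu.\]
Since $f_1$ vanishes outside $\{|f|>\lambda/(2C_0)\}$ and $\varphi(0)=0$, the integrand is supported on that set, and on that set $f_1=f$. Combining the two displays gives exactly
\[\mu(\{x:|Tf(x)|>\lambda\})\leq C\int_{\{x:|f(x)|>\lambda/(2C_0)\}}\varphi\!\left(\frac{2c|f(x)|}{\lambda}\right)d\mu,\]
which is the desired estimate.

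There is essentially no obstacle here; the only subtlety is making sure the $L^\infty$ estimate is available for the truncation $f_2$ (which follows since $\{|f|\leq \lambda/(2C_0)\}$ makes $f_2$ bounded by $\lambda/(2C_0)$ pointwise, hence in $L^\infty(\mu)$) and that sub-additivity can indeed be used pointwise $\mu$-a.e. No structural properties of $\varphi$ beyond being a Young function (so that $\varphi(0)=0$ and $\varphi$ is monotone) are needed.
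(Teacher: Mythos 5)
Your proof is correct and follows essentially the same route as the paper: the same splitting $f=f_1+f_2$ at height $\lambda/(2C_0)$, using the $L^\infty(\mu)$ bound to show the $Tf_2$ contribution is $\mu$-negligible, and then applying the weak modular hypothesis to $f_1$ at level $\lambda/2$. The only cosmetic difference is that the paper phrases the vanishing of the $Tf_2$ term as a contradiction with $\norm{Tf_2}_{L^\infty(\mu)}\leq C_0\norm{f_2}_{L^\infty(\mu)}$, while you state the pointwise bound $|Tf_2|\leq\lambda/2$ $\mu$-a.e.\ directly.
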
 

\begin{proof}
	Fix $\lambda>0$ and define $f_1=f\mathcal{X}_{\{x: |f(x)|>\lambda/(2C_0)\}}$ and $f_2=f-f_1$. So,
	\[
    \mu\left(\left\{x: |Tf(x)|>\lambda\right\}\right)\leq \mu\left(\left\{x: |Tf_1(x)|>\frac{\lambda}{2}\right\}\right)+\mu\left(\left\{x: |Tf_2(x)|>\frac{\lambda}{2}\right\}\right), 
    \]
    and observe that the second term is zero. Indeed, if $x$ satisfies $|Tf_2(x)|>\lambda/2$ we have
    \[\frac{\lambda}{2}<\norm{Tf_2}_{L^\infty(\mu)}\leq C_0\norm{f_2}_{L^\infty(\mu)},\]
    which is a contradiction. Thus, by applying the hypothesis to $f_1$ we get
    \begin{align*}
    \mu\left(\left\{x: |Tf(x)|>\lambda\right\}\right)&=\mu\left(\left\{x: |Tf_1(x)|>\frac{\lambda}{2}\right\}\right)\\
    &\leq C\int_{\R^n}\varphi\left(\frac{2c|f_1(x)|}{\lambda}\right)\,d\mu\\
    &\leq C\int_{\{x: |f(x)|>\lambda/(2C_0)\}}\varphi\left(\frac{2c|f(x)|}{\lambda}\right)\,d\mu. \qedhere
    \end{align*}
\end{proof}

\begin{lema}\label{lema interpolacion modular}
	Let $\mu$ be a measure and $\varphi:\R^+\to\R^+$ a non-decreasing function. Let $F$ and $G$ be nonnegative functions satisfying
	\begin{equation}\label{tipo debil para interpolacion modular}
	\mu(\{x\in \R^n: F(x)>\lambda\})\leq C\int_{\{x: G(x)>c\lambda\}}\varphi\left(\frac{G(x)}{\lambda}\right)\,d\mu,
	\end{equation}
for some positive constants $C,c$ and every $\lambda>0$. Let $\psi$ be a function in $\mathcal{C}^1([0,\infty))$ such that $\psi(0)=0$, $\psi'$ is non-decreasing and assume that $\varphi\prec_N \psi'$. Then, we have that
\[\int_{\R^n} \psi(F(x))\,d\mu\leq C \int_{\R^n} \psi\left(\frac{2G(x)}{c}\right)\,d\mu.\]
\end{lema}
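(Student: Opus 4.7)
The plan is to reduce the modular estimate to a pointwise bound via the layer-cake formula and then exploit the quasi-increasing hypothesis $\varphi\prec_N\psi'$. Since $\psi\in\mathcal{C}^1$ with $\psi(0)=0$, we may write
\[\psi(F(x))=\int_0^{F(x)}\psi'(\lambda)\,d\lambda=\int_0^\infty \psi'(\lambda)\,\mathcal{X}_{\{F>\lambda\}}(x)\,d\lambda.\]
Integrating against $\mu$ and applying Tonelli gives
\[\int_{\R^n}\psi(F)\,d\mu=\int_0^\infty \psi'(\lambda)\,\mu(\{F>\lambda\})\,d\lambda,\]
and then the weak-type hypothesis \eqref{tipo debil para interpolacion modular} followed by a second Fubini step yields
\[\int_{\R^n}\psi(F)\,d\mu\leq C\int_{\R^n}\left(\int_0^{G(x)/c}\psi'(\lambda)\varphi\!\left(\frac{G(x)}{\lambda}\right)\,d\lambda\right)d\mu(x),\]
where the upper limit $G(x)/c$ records the constraint $G(x)>c\lambda$.

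Next I would control the inner integral pointwise. For each fixed $x$, apply the definition of $\varphi\prec_N\psi'$ to the family $\{\psi'(t)\varphi(\alpha/t)\}$, specialized at the point $x=G(x)/c$ and parameter $\alpha=G(x)$, so that $\alpha/x=c$. This produces
\[\int_0^{G(x)/c}\psi'(\lambda)\varphi\!\left(\frac{G(x)}{\lambda}\right)d\lambda\leq \rho\,\frac{G(x)}{c}\,\psi'\!\left(\frac{G(x)}{c}\right)\varphi(c),\]
with a constant $\rho$ independent of $x$, and $\varphi(c)$ depending only on the data.

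Finally, since $\psi'$ is non-decreasing and $\psi\geq 0$, I would compare $\frac{G(x)}{c}\psi'(G(x)/c)$ to an increment of $\psi$:
\[\frac{G(x)}{c}\,\psi'\!\left(\frac{G(x)}{c}\right)\leq \int_{G(x)/c}^{2G(x)/c}\psi'(s)\,ds=\psi\!\left(\frac{2G(x)}{c}\right)-\psi\!\left(\frac{G(x)}{c}\right)\leq \psi\!\left(\frac{2G(x)}{c}\right).\]
Putting the three estimates together and absorbing $\rho\,\varphi(c)$ into the constant yields the desired conclusion.

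The main obstacle is bookkeeping rather than substance: one must choose the point at which to apply the $\prec_N$ condition so that $\alpha/x$ equals a fixed constant (here $c$) depending only on the weak-type hypothesis, since only then does $\varphi(\alpha/x)=\varphi(c)$ become an absolute factor that can be pulled outside. Once this matching is made, the remaining monotonicity comparison to pass from $t\psi'(t)$ to $\psi(2t)$ is routine.
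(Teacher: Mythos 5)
Your argument is correct and follows essentially the same route as the paper: layer-cake formula plus Tonelli, the weak-type hypothesis to restrict the inner integral to $\lambda<G(x)/c$, the quasi-increasing property of $\{\psi'(t)\varphi(\alpha/t)\}$ applied precisely at $\alpha=G(x)$ and endpoint $G(x)/c$ so that $\varphi(\alpha/x)=\varphi(c)$ is an absolute constant, and finally monotonicity of $\psi'$ to bound $t\psi'(t)$ by $\psi(2t)$. No gaps; the only cosmetic difference is that the paper phrases the last comparison as $\psi(s)\geq\tfrac{s}{2}\psi'(s/2)$ rather than via the increment $\psi(2G/c)-\psi(G/c)$, which is the same estimate.
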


\begin{proof}
	\begin{align*}
	\int_{\R^n}\psi(F(x))\,d\mu&=\int_0^\infty \psi'(\lambda)\mu(\{x: F(x)>\lambda\})\,d\lambda\\
	&\leq C\int_{\R^n}\left(\int_0^{c^{-1}G(x)}\psi'(\lambda)\varphi\left(\frac{G}{\lambda}\right)\,d\lambda\right)\,d\mu.
	\end{align*}
Now, since $\varphi\prec_N \psi'$ we can write
\[\int_0^{c^{-1}G(x)}\psi'(\lambda)\varphi\left(\frac{G}{\lambda}\right)\,d\lambda\leq \rho \varphi(c)c^{-1}G(x)\psi'(c^{-1}G(x)).\]
Observe also that 
\[\psi(x)=\int_0^x\psi'(t)\,dt\geq \int_{x/2}^x \psi'(t)\,dt\geq \frac{x}{2}\psi'\left(\frac{x}{2}\right),\]
since $\psi'$ is non-decreasing. Then,
\[\int_{\R^n}\psi(F(x))\,d\mu\leq C\int_{\R^n}\psi\left(\frac{2G}{c}\right)\,d\mu.\qedhere\]
\end{proof}

We are now in position to use Theorem~\ref{teorema principal} to give an alternative proof of the boundedness of $M_\Phi$. To begin with, fix $r<p<\infty$ and $w\in A_{p/r}$. Then, by the Jones' Factorization Theorem, there exist weights $u,v^r\in A_1$ such that $w=uv^{r(1-p/r)}=uv^{r-p}$. Thus, if $S_\Phi f=M_\Phi(fv)v^{-1}$  we can write
\begin{align*}
\int_{\R^n}M_{\Phi}(f)^pw\,dx&=\int_{\R^n}\left(\frac{M_{\Phi}(fv^{-1}v)}{v}\right)^puv^r\,dx\\
&=\int_{\R^n} S_{\Phi}(fv^{-1})^puv^r\,dx. 
\end{align*}
Note that, if we could prove that $S_{\Phi}$ is bounded in $L^p(uv^r)$, then 
\[\int_{\R^n} S_{\Phi}(fv^{-1})^puv^r\,dx\leq C \int_{\R^n} |f|^puv^{r-p}=C\int_{\R^n}|f|^pw\,dx.\]

Let us show the forementioned boundedness property of $S_\Phi$. We shall apply Lemma~\ref{lema interpolacion modular} with $\mu=uv^r\,dx$, $\varphi=\Phi$, $\psi(t)=t^p$, $F=S_\Phi(f)$ and $G=f$. In order to check the hypotheses of this lemma, we will prove some facts separately.

First, we shall show that $S_\Phi$ is bounded in $L^\infty(uv^r)$.

\begin{lema}\label{lema:  tipo infinito infinito fuerte de SPhi}
	Let $\Phi$, $S_\Phi$ as above and $u,v^r\in A_1$. Then, there exists a positive constant $C$ for which 
	\[\norm{S_{\Phi}f}_{L^\infty(uv^r)}\leq C \norm{f}_{L^\infty(uv^r)}.\]
\end{lema}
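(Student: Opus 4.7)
The first step is to notice that the measure $d\mu=uv^r\,dx$ has exactly the same null sets as Lebesgue measure, since $u$ and $v$ are weights, hence positive and finite almost everywhere. Therefore $L^\infty(uv^r)$ coincides isometrically with $L^\infty(\R^n)$, and it suffices to produce a constant $C$ such that $\|S_\Phi f\|_\infty\leq C\|f\|_\infty$ for every essentially bounded $f$. Because the Luxemburg average is positively homogeneous, we have $\|fv\|_{\Phi,Q}\leq\|f\|_\infty\,\|v\|_{\Phi,Q}$ for every cube $Q$, hence $M_\Phi(fv)(x)\leq\|f\|_\infty\,M_\Phi v(x)$ pointwise. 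Thus the whole lemma is reduced to the pointwise estimate
\begin{equation}\label{plan_pointwise}
M_\Phi v(x)\leq C\,v(x)\qquad\text{for a.e. }x\in\R^n.
\end{equation}

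To prove \eqref{plan_pointwise} I would first harvest two consequences of $v^r\in A_1$. Applying Jensen's inequality to the convex function $t\mapsto t^r$ gives $v\in A_1$ with $[v]_{A_1}\leq[v^r]_{A_1}^{1/r}$. Moreover, since $v^r\in A_1$ satisfies a reverse H\"older inequality, there exists $s>1$ with $v^r\in\mathrm{RH}_s$, which is equivalent to $v^{rs}\in A_1$. Fix once and for all $\e>0$ with $r+\e\leq rs$; then Jensen combined with the $A_1$ bound for $v^{rs}$ yields
\[
\left(\frac{1}{|Q|}\int_Q v^{r+\e}\,dx\right)^{1/(r+\e)} \leq \left(\frac{1}{|Q|}\int_Q v^{rs}\,dx\right)^{1/(rs)} \leq [v^{rs}]_{A_1}^{1/(rs)}\inf_Q v,
\]
for every cube $Q\subset\R^n$.

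On the side of $\Phi$, I would extract two regime-dependent upper bounds: the lower type $r$ condition with $t=1$ yields $\Phi(\tau)\leq C_r\Phi(1)\tau^r$ for $0\leq\tau\leq 1$, while \eqref{propiedad acotacion de Phi} together with $(\log\tau)^\delta\leq K_\e\tau^\e$ for $\tau\geq t_0$ (and the monotonicity of $\Phi$ on $[1,t_0]$) yields $\Phi(\tau)\leq A_\e\,\tau^{r+\e}$ for every $\tau\geq 1$. Splitting $Q$ according to whether $v\leq\lambda$ or not and applying these two bounds gives
\[
\frac{1}{|Q|}\int_Q \Phi\!\left(\frac{v}{\lambda}\right)dx\leq \frac{C_r\Phi(1)}{\lambda^r}\cdot\frac{1}{|Q|}\int_Q v^r\,dx+\frac{A_\e}{\lambda^{r+\e}}\cdot\frac{1}{|Q|}\int_Q v^{r+\e}\,dx,
\]
and the two averages on the right are controlled by $[v^r]_{A_1}(\inf_Q v)^r$ and by $[v^{rs}]_{A_1}^{(r+\e)/(rs)}(\inf_Q v)^{r+\e}$, respectively. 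Choosing $\lambda=C_0\inf_Q v$ with $C_0$ sufficiently large (depending only on $n$, $r$, $\e$, and the $A_1$ constants involved) makes the right-hand side at most $1$; this shows $\|v\|_{\Phi,Q}\leq C_0\inf_Q v\leq C_0\,v(x)$ for a.e. $x\in Q$, and taking the supremum over $Q\ni x$ establishes \eqref{plan_pointwise}.

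The only substantive difficulty is this pointwise bound, and within it the need to control the two growth regimes of $\Phi$ (the lower-type power behavior near $0$ and the $t^r(\log t)^\delta$ behavior at infinity) \emph{simultaneously} with two matching bounds on averages of $v$: one coming directly from $v\in A_1$ and the other, for the slightly larger power $r+\e$, from the self-improvement $v^{rs}\in A_1$. Once this interplay is in hand, the homogeneity and null-set reductions of the first paragraph close the argument.
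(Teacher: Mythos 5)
Your proof is correct and follows essentially the same route as the paper: reduce $L^\infty(uv^r)$ to $L^\infty$, pull out $\norm{f}_\infty$, bound $\norm{v}_{\Phi,Q}$ by splitting the cube according to the size of $v/\lambda$, use $\Phi(\tau)\lesssim\tau^{r+\e}$ for large $\tau$ together with the $A_1$ self-improvement of $v^r$ (which you obtain via reverse H\"older, i.e. $v^{rs}\in A_1$, while the paper invokes $v^{r+\e}\in A_1$ directly), and conclude $\norm{v}_{\Phi,Q}\lesssim\inf_Q v\leq v(x)$ a.e. The only cosmetic difference is your choice $\lambda=C_0\inf_Q v$ and the use of the lower type on the small-$v$ part, versus the paper's choice of $\lambda$ as the $(r+\e)$-average of $v$ and a plain $\Phi(t_0)$ bound there; both close the argument identically.
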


\begin{proof}
	 Let us first observe that $L^\infty(uv^r)=L^\infty$ because the sets that have null Lebesgue measure coincide with those which have $\mu$ measure zero, with $d\mu=uv^r\,dx$. So it will be enough to prove the lemma for the Lebesgue measure. We shall also assume that $\norm{f}_{L^\infty}=1$ and the general case will follow by homogeneity. 
	
	Since $v^r\in A_1$, there exists $\e>0$ such that $v^{r+\e}\in A_1$. For $t\geq t_0$, we have that $\Phi(t)\leq C_0t^r(\log t)^{\delta}\leq Ct^{r+\e}$. We want to estimate $M_\Phi(fv)v^{-1}$. Fix $x$ and a cube $Q$ containing $x$. Then, 	if  we take $\lambda=(|Q|^{-1}\int_Q v^{r+\e})^{1/(r+\e)}$,
	\begin{align*}
	\frac{1}{|Q|}\int_Q\Phi\left(\frac{fv}{\lambda}\right)\,dx&\leq\frac{C}{|Q|}\int_Q\Phi\left(\frac{v}{\lambda}\right)\,dx\\
	&= \frac{C}{|Q|}\left[\int_{Q\cap \{v\leq t_0\lambda\}}\Phi\left(\frac{v}{\lambda}\right)\,dx+\int_{Q\cap \{v>t_0 \lambda\}}\Phi\left(\frac{v}{\lambda}\right)\,dx\right]\\
	&\leq C\Phi(t_0) + \frac{C}{|Q|}\int_Q \left(\frac{v}{\lambda}\right)^{r+\e}\\
	&\leq C.
	\end{align*}
Therefore,
	\[\norm{fv}_{\Phi,Q}\leq C\lambda\leq C[v^{r+\e}]_{A_1}^{1/(r+\e)}v(x), \quad \textrm{ a.e. }x\in Q,\]
	and taking the supremum over all cube $Q$ containing $x$, we get $M_{\Phi}(fv)(x)v^{-1}(x)\leq C_0$. Finally, by taking the supremum over $x$ we are done.   
\end{proof}

Next, observe that condition \eqref{tipo debil para interpolacion modular} is guaranteed by combining Lemmas~\ref{lema:  tipo infinito infinito fuerte de SPhi} and \ref{implicacion de tipo debil para interpolacion modular}. It only remains to prove that $\Phi\prec_N \psi(t)=pt^{p-1}$. This fact is contained in the following lemma.


\begin{lema}
	Let $p>r$ and $\psi(t)=pt^{p-1}$. Then $\Phi\prec_N\psi$.
\end{lema}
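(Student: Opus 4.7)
The plan is to unfold the definition of $\prec_N$, reduce the quasi-increasing condition on the family $\{\psi(x)\Phi(\alpha/x)\}_{\alpha>0}$ to an $\alpha$-free pointwise bound, and then exploit the submultiplicativity of $\Phi$ to reduce the matter to a convergent integral.

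Concretely, the conclusion $\Phi\prec_N\psi$ amounts to exhibiting $\rho>0$, independent of $\alpha>0$, for which
\[\frac{1}{x}\int_0^x p\,t^{p-1}\Phi(\alpha/t)\,dt \leq \rho\,p\,x^{p-1}\Phi(\alpha/x) \quad \forall x>0.\]
I would first change variables $s=\alpha/t$ on the left-hand side, which after dividing both sides by $p\alpha^p/x$ and writing $u=\alpha/x$ is equivalent to
\[u^p\int_u^\infty \frac{\Phi(s)}{s^{p+1}}\,ds \leq \rho\,\Phi(u),\qquad u>0.\]
Notice that the parameter $\alpha$ has disappeared, so the desired uniformity in $\alpha$ reduces to this single pointwise inequality for $\Phi$ — a $B_p$-type estimate in sharp pointwise form.

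The second step is to homogenize by setting $s = u\tau$, giving
\[u^p\int_u^\infty \frac{\Phi(s)}{s^{p+1}}\,ds = \int_1^\infty \frac{\Phi(u\tau)}{\tau^{p+1}}\,d\tau,\]
and then applying the submultiplicativity hypothesis $\Phi(u\tau)\leq \Phi(u)\Phi(\tau)$. This factors $\Phi(u)$ cleanly out of the integral and leaves
\[u^p\int_u^\infty \frac{\Phi(s)}{s^{p+1}}\,ds \leq \Phi(u)\int_1^\infty \frac{\Phi(\tau)}{\tau^{p+1}}\,d\tau,\]
so it only remains to check that $\rho:=\int_1^\infty \Phi(\tau)\tau^{-p-1}\,d\tau$ is finite.

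The contribution of $[1,t_0]$ is finite by continuity of $\Phi$, while on $[t_0,\infty)$ the growth condition \eqref{propiedad acotacion de Phi} yields $\Phi(\tau)\leq C_0\tau^r(\log\tau)^\delta$, whence
\[\int_{t_0}^\infty \frac{\Phi(\tau)}{\tau^{p+1}}\,d\tau \leq C_0\int_{t_0}^\infty \frac{(\log\tau)^\delta}{\tau^{p-r+1}}\,d\tau,\]
which converges precisely because $p>r$. I do not anticipate any real obstacle here: the proof is driven entirely by the two substitutions $s=\alpha/t$ and $s=u\tau$, submultiplicativity does the work of eliminating the ``base point'' $u$, and the two defining features of the class $\mathcal{F}_r$ — submultiplicativity and the logarithmic upper bound on $\Phi(\tau)/\tau^r$ — together with the assumption $p>r$ supply the final integrability.
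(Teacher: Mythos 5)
Your proof is correct. It runs on the same two engines as the paper's argument --- submultiplicativity of $\Phi$ to factor out the ``base point'' value, and the growth condition \eqref{propiedad acotacion de Phi} together with $p>r$ to make the resulting sum/integral converge --- but the implementation is genuinely different. The paper works directly with the average $\frac1x\int_0^x\psi(t)\Phi(\alpha/t)\,dt$, splits $[0,x]$ into dyadic blocks $[x/2^{k+1},x/2^k]$, uses monotonicity on each block, submultiplicativity to pull out $\Phi(\alpha/x)$, and then the crude comparison $\Phi(t)\lesssim\Phi_0(t)\lesssim t^{r+\e}$ for $t\ge 1$ (for which it also invokes the lower type $r$ to get $\Phi\lesssim\Phi_0$ on $[0,t_0]$), ending with a geometric series of ratio $2^{r+\e-p}<1$. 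You instead strip out the parameter $\alpha$ by the substitutions $s=\alpha/t$ and $s=u\tau$, reducing the whole two-parameter family of quasi-increasing inequalities to the single pointwise estimate $u^p\int_u^\infty\Phi(s)s^{-p-1}\,ds\le\rho\,\Phi(u)$, and then submultiplicativity reduces this to the finiteness of $\int_1^\infty\Phi(\tau)\tau^{-p-1}\,d\tau$, which is exactly the $B_p$-type integral \eqref{condicion Bp} and converges by \eqref{propiedad acotacion de Phi} since $p>r$. What your route buys: it is continuous rather than dyadic, it makes explicit the link between $\Phi\prec_N\psi$ and the $B_p$ condition already present in the paper, it keeps the logarithmic factor instead of sacrificing an $\e$ of the exponent, and it does not use the lower-type hypothesis at all (boundedness of $\Phi$ on $[1,t_0]$ suffices); the paper's dyadic version, on the other hand, avoids any change of variables and is the kind of argument that transfers verbatim to discrete or doubling settings.
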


\begin{proof}
	
	Observe first that $\Phi(t)\lesssim \Phi_0(t)$ for every $t>0$, where $\Phi_0(t)=t^r(1+\log^+t)^{\delta}$. Indeed, if $t\leq t_0$ we can use the fact that $\Phi\in \mathcal{F}_r$ to get 
\[\Phi(t)=\Phi\left(\frac{t}{t_0}t_0\right)\leq  C_r\left(\frac{t}{t_0}\right)^r\Phi(t_0)\leq C \Phi_0\left(t\right).\]
On the other hand, if $t>t_0$ we obtain
\[\Phi(t)\leq C_0 t^r(\log t)^{\delta}\leq C\Phi_0(t).\]
Therefore,
\begin{align*}
\frac{1}{x}\int_0^x\psi(t)\Phi\left(\frac{\alpha}{t}\right)\,dt&=\frac{1}{x}\s{k=0}{\infty}\int_{x/2^{k+1}}^{x/2^k}\psi(t)\Phi\left(\frac{\alpha}{t}\right)\,dt\\
&\leq \frac{1}{x}\s{k=0}{\infty}\int_{x/2^{k+1}}^{x/2^k}\psi(t)\Phi\left(\frac{2^{k+1}}{x}\alpha\right)\,dt\\
&\lesssim \frac{1}{x}\Phi\left(\frac{\alpha}{x}\right)\s{k=0}{\infty}\int_{x/2^{k+1}}^{x/2^k}\psi(t)\Phi(2^{k+1})\,dt
\end{align*}
Since $p>r$, there exists $\e>0$ such that $p-r>\e$. Recalling that $\psi(t)=pt^{p-1}$ and $\Phi_0(t)\leq C\nu^{-1}t^{r+\nu}$, for every $\nu>0$ and $t\geq 1$ we can estimate 
\begin{align*}
\frac{1}{x}\int_0^x\psi(t)\Phi\left(\frac{\alpha}{t}\right)\,dt&\lesssim \frac{1}{x}\Phi\left(\frac{\alpha}{x}\right)\s{k=0}{\infty}2^{(k+1)(r+\e)}\int_{x/2^{k+1}}^{x/2^k}pt^{p-1}\,dt\\
&\lesssim \frac{1}{x}\Phi\left(\frac{\alpha}{x}\right)\s{k=0}{\infty}2^{(k+1)(r+\e)}\left(\frac{x^p}{2^{kp}}-\frac{x^p}{2^{(k+1)p}}\right)\\
&\lesssim \frac{1}{x}\Phi\left(\frac{\alpha}{x}\right)\s{k=0}{\infty}2^{(k+1)(r+\e-p)}x^p\\
&\lesssim \psi(x)\Phi\left(\frac{\alpha}{x}\right).
\end{align*}

\end{proof}

\bigskip


\providecommand{\bysame}{\leavevmode\hbox to3em{\hrulefill}\thinspace}
\providecommand{\MR}{\relax\ifhmode\unskip\space\fi MR }
\providecommand{\MRhref}[2]{%
	\href{http://www.ams.org/mathscinet-getitem?mr=#1}{#2}
}
\providecommand{\href}[2]{#2}

\end{document}